\theoremstyle{thmstyleone}%
\newtheorem{thm}{Theorem}
\newtheorem{cor}[thm]{Corollary}
\newtheorem{lem}[thm]{Lemma}
\theoremstyle{thmstyletwo}%
\newtheorem{exa}{Example}%
\newtheorem{rem}{Remark}%
\theoremstyle{thmstylethree}%
\newtheorem{defi}{Definition}%
\newtheorem*{defibis}{Definition 3.bis}
\begin{document}

\title[Article Title]{On linearly ordered sets of chain components}

\author[1]{\fnm{Patrizio} \sur{Cintioli}}\email{patrizio.cintioli@unicam.it}
\equalcont{These authors contributed equally to this work.}

\author*[1]{\fnm{Alessandro} \sur{Della Corte}}\email{alessandro.dellacorte@unicam.it}
\equalcont{These authors contributed equally to this work.}

\author[2]{\fnm{Marco} \sur{Farotti}}\email{marco.farotti@unicam.it}
\equalcont{These authors contributed equally to this work.}

\affil[1]{\orgdiv{Mathematics Division}, \orgname{University of Camerino}, \orgaddress{\street{via Madonna delle Carceri 9}, \city{Camerino (MC)}, \postcode{62032}, \country{Italy}}}

\affil[2]{\orgdiv{Doctoral School in Computer Sciences and Mathematics}, \orgname{University of Camerino}, \orgaddress{\street{via Madonna delle Carceri 9}, \city{Camerino (MC)}, \postcode{62032}, \country{Italy}}
}

\abstract{In a dynamical system $(X,f)$, with $X$ a compact metric space, the chain components, the fundamental building blocks in the Conley decomposition of dynamics, have a natural partial order induced by the chain relation between points. Although chain components are crucial for understanding the long-term behavior of topological systems, they have not been widely studied from the point of view of poset theory. 

In this work, we pursue this line of research, considering both the case in which $f$ is a continuous map and the general case in which no regularity assumption is made. 
Our main results are that, if $f$ is continuous:

$\bullet$ the chain components poset cannot be a linearly and densely ordered set;
    
$\bullet$ every countable well-order with a maximum is the order type of the chain components poset of an interval map.

 If no regularity assumption is made:
 
$\bullet$ there is a dynamical system on the interval whose chain components poset is countable and densely ordered;
    
$\bullet$ the chain components poset has at least
one minimal element.

These results, bridging dynamical systems and order theory, highlight both the structural constraints and the possibilities for the chain components posets.
}

\keywords{Topological dynamics; Chain recurrence; Discontinuous dynamical systems; Dense orders.}

\pacs[MSC Classification]{37B20, 37B30, 06A06}

\maketitle

\section{Introduction}\label{sec:1}
The structure of the set of chain recurrent points in a dynamical system has long been a subject of interest in topological dynamics, particularly through the lens of Conley's theory. 
Among the tools used to analyze topological dynamical systems, the partial ordering of chain components, induced by the chain relation, offers a compact and effective way to encode dynamical complexity.

\noindent In this paper, we study the chain components poset (in short, the $\mathcal{C}$-poset) associated with a dynamical system, with particular focus on the realizability of certain order types under some fairly natural assumptions on the system. 
We aim to clarify the interplay between the regularity of the map, the topological nature of the phase space, and the possible ordering structures that emerge from the dynamics.

\noindent Let $(X,d)$ be a metric space, and $f:X\to X$ a map. 
We say that the pair $(X,f)$ is a \emph{topological dynamical system} (or simply a dynamical system). Notice that, in general, we are not assuming any regularity property for $f$. We will say that $(X,f)$ is a \emph{continuous dynamical system} if $f$ is assumed to be a continuous map.

\noindent We will indicate by $\mathbb{N}$ 
and $\mathbb{N}_0$, respectively, the set of positive and non-negative integers.

\begin{defi}\label{def_inv}
    Let $(X,f)$ be a dynamical system, and consider $S\subseteq X$. We say that the set $S$ is \emph{$f$-invariant} (or simply invariant) if $f(S)\subseteq S$.
\end{defi}
\begin{defi}\label{def_subsy}
    Let $(X,f)$ be a dynamical system. For $Y\subseteq X$, we say that $(Y,f\vert_Y)$ is a \emph{subsystem} of $(X,f)$ if $Y$ is closed and invariant. 
\end{defi}

\noindent Let us recall the notion of $\varepsilon$-{\em chain} (or $\varepsilon$-{\em pseudo-orbit}) developed in \cite{co78}.
\begin{defi}\label{def_chain_rel}
Let $(X,f)$ be a dynamical system. Given two points $x,y\in X$ and $\varepsilon>0$, an $\varepsilon$-\emph{chain} from $x$ to $y$ is a finite set of points $x_0,x_1,\ldots,x_n$ in $X$, with $n\ge 1$, such that
\begin{itemize}
 \item[(i)] $x_0=x$ and $x_n=y$,
 \item[(ii)] $d(f(x_i),x_{i+1})<\varepsilon$ for every $i=0,1,\ldots, n-1$.
\end{itemize}
The {\it chain relation} is the binary relation $\mathcal{C}\subseteq X^2$ defined as follows: given $x,y\in X$, 
\[
x\,\mathcal{C}\, y\iff \text{for every $\varepsilon>0$ there exists an $\varepsilon$-chain from $x$ to $y$}.
\]
\end{defi}

\noindent A point $x\in X$ is called {\it chain recurrent} if $x\,\mathcal{C}\,x$. 
The set of all the chain recurrent points of a dynamical system $(X,f)$ is denoted by $CR(X,f)$.
We write simply $CR_f$ when the space $X$ is clear from the context.
Given a dynamical system $(X,f)$, we define the equivalence relation $E$ on $CR(X,f)$ as follows: for every $x,y\in CR(X,f)$,
\[
x\,E\,y\iff x\,\mathcal{C}\, y\text{ and }y\,\mathcal{C}\, x.
\]
Let $\mathfrak{C}(X,f):=CR(X,f)/E$ be the set of equivalence classes on $CR(X,f)$ with respect to the relation $E$. 
We write simply $\mathfrak{C}_f$ when the space $X$ is clear from the context.
The equivalence classes of $\mathfrak{C}_f$ are usually called \emph{chain components} (\cite[Def.~2.70]{kurka}). 
We define the partial order relation $\preceq$ on $\mathfrak{C}_f$ in the usual way through representatives. 
Before introducing this partial order on chain components, we briefly comment on the direction chosen for the relation. Of course both directions are a priori admissible in defining the relation, but we adopt the convention, common in the literature, that $[x] \preceq [y]$ whenever $y \,\mathcal{C}\, x$, that is, when the dynamics flows from $y$ to $x$ through pseudo-orbits with arbitrarily small corrections. 
This choice aligns naturally with the role of Lyapunov functions, which decrease along orbits and are constant precisely on the chain components. 
Intuitively, one can think of $[y]$ ``collapsing'' dynamically into $[x]$, as energy dissipates or as the system evolves towards a more stable behavior. 
In this sense, the order translates into a form of dynamical causality if we think that ``larger" components flowing towards ``smaller" ones. 

\noindent Given $[x],[y]\in \mathfrak{C}_f$, we set:
\[
[x]\preceq [y]\iff y\,\mathcal{C}\,x.
\]
The poset $(\mathfrak{C}_f,\preceq)$ is called \emph{chain components poset} (in short, the $\mathcal{C}$-poset) of the system $(X,f)$.

\noindent We can also give the following purely topological definition of the chain relation $\mathcal{C}$.

\begin{defibis}
Let \((X, f)\) be a dynamical system, and let \(x, y \in X\). The chain relation $\mathcal{C}$ on $X$ is the binary relation defined as follows: given $x, y \in X$, $x\,\mathcal{C}\, y$ if and only if for every neighborhood $U\subseteq X\times X$ of the diagonal there exists a finite sequence of points \(x = x_0, x_1, \ldots, x_n = y\), called a chain, such that for each \(i = 0, 1, \ldots, n-1\),  
\[ 
(f(x_i), x_{i+1}) \in U. 
\]
\end{defibis}

\begin{rem}
The two definitions \ref{def_chain_rel} and 3.\textit{bis}. are equivalent (see for instance \cite[p.~47]{kurka}).
\end{rem}

\begin{defi}\label{def_dens_ord_}
    We say that a poset $(A,\le)$ is \emph{densely ordered} on some subset $S\subseteq A$ (or simply that the subset $S$ is densely ordered) if, for every $x,y\in S$ such that $x<y$, there exists $z\in S$ such that $x<z<y$.
\end{defi}
\begin{rem}
    Notice that if a poset $(A,\le)$ has a densely ordered subset $S\subseteq A$, then $|S|\ge \aleph_0$.
\end{rem}

\begin{defi}\label{def_scattered}
A poset $(A,\leq)$ is said to be \emph{scattered} if it does not contain a subset order-isomorphic to the rationals $(\mathbb{Q}, \leq)$.  
\end{defi}

\begin{defi}\label{def_directedset}

    A \emph{directed set} is a pair $(\Lambda,\leq)$ consisting of a nonempty set
$\Lambda$ and a reflexive and transitive relation $\leq$ on $\Lambda$ such that
for every $\lambda_1,\lambda_2 \in \Lambda$ there exists $\lambda_3 \in \Lambda$
with
$$\lambda_1 \leq \lambda_3 \quad \text{and} \quad \lambda_2 \leq \lambda_3.$$
    
\end{defi}

\begin{defi}\label{def_netcluster}
    Let $(\Lambda,\leq)$ be a directed set and $X$ a topological space.
    A \emph{net} in $X$ is a function $x : \Lambda \to X$. We will denote the net by
    $\{x_\lambda\}_{\lambda \in \Lambda}$.
    For a net $\{x_\lambda\}_{\lambda \in \Lambda}$ in $X$, a point $z \in X$ is called a \emph{cluster point}
    of the net if for every neighbourhood $U$ of $z$ and every $\lambda_0 \in \Lambda$
    there exists $\lambda \in \Lambda$ with $\lambda \geq \lambda_0$ such that 
    $x_\lambda \in U.$
\end{defi}
\noindent Let us recall the following basic result on topological quotients (cf.~Proposition 5.9 in \cite[p.~94]{ma15}):
\begin{thm}\label{prop_quot}
    Let $X,Y$ be topological spaces and $f: X\rightarrow Y$ continuous function. Let $\sim$ be an equivalence relation on $X$ and $\pi : X\rightarrow X/\sim$ the canonical quotient map. 
    There exists a continuous map $$g: X/\sim\, \longrightarrow \, Y$$ such that $g \circ \pi =f$ if and only if $f$ is constant on equivalence classes.
\end{thm}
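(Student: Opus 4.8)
The plan is to prove the two implications separately, with the forward direction essentially immediate and the backward direction relying on the defining property of the quotient topology.

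For the ``only if'' direction, suppose a continuous $g\colon X/\!\sim\ \to Y$ with $g\circ\pi=f$ exists. By definition of the canonical quotient map we have $\pi(x)=\pi(x')$ whenever $x\sim x'$; hence $f(x)=g(\pi(x))=g(\pi(x'))=f(x')$, so $f$ is constant on each equivalence class. No topological input is needed in this direction.

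For the ``if'' direction, assume $f$ is constant on equivalence classes and define $g\colon X/\!\sim\ \to Y$ by $g([x]):=f(x)$, where $[x]$ denotes the $\sim$-class of $x$. The hypothesis is exactly what guarantees that this assignment does not depend on the chosen representative, so $g$ is a well-defined function, and $g\circ\pi=f$ holds by construction. It then remains to check that $g$ is continuous. The key step is to recall that, by definition of the quotient topology on $X/\!\sim$, a subset $W\subseteq X/\!\sim$ is open if and only if $\pi^{-1}(W)$ is open in $X$. Applying this with $W=g^{-1}(V)$ for an arbitrary open $V\subseteq Y$, one computes $\pi^{-1}(g^{-1}(V))=(g\circ\pi)^{-1}(V)=f^{-1}(V)$, which is open in $X$ because $f$ is continuous; hence $g^{-1}(V)$ is open in $X/\!\sim$, and $g$ is continuous.

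I do not anticipate a genuine obstacle: this is the standard universal property of topological quotients. The only points requiring minimal care are the well-definedness of $g$, which is precisely where the ``constant on equivalence classes'' hypothesis is used, and the identification $\pi^{-1}(g^{-1}(V))=f^{-1}(V)$, which reduces continuity of $g$ to that of $f$ via the characterization of open sets in $X/\!\sim$. (Uniqueness of $g$, though not explicitly part of the statement, likewise follows at once from surjectivity of $\pi$.)
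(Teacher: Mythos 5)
Your proof is correct and is exactly the standard argument for the universal property of topological quotients: the paper itself gives no proof, citing Proposition~5.9 of Manetti's \emph{Topology}, and your argument (well-definedness of $g$ from constancy on classes, plus continuity via $\pi^{-1}(g^{-1}(V))=f^{-1}(V)$ and the defining property of the quotient topology) is the one found there. Nothing is missing.
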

\begin{defi}
    We say that a subset $S\subseteq X$ of a topological space is \emph{nowhere dense} if $\left( \overline{S}\right)^\circ=\emptyset$, that is, the interior of its closure is empty or equivalently, if $S$ is not dense in any nonempty open subset $U\subseteq X$.
\end{defi}

\begin{defi}
    We say that a topological space is \emph{totally disconnected} if its only connected subspaces are one-point sets.
\end{defi}

\begin{defi}\label{def_noncompact}
    We say that a topological space $X$ is \emph{second countable} if it has a countable basis, that is, there exists a countable family of open sets $\{U_1,U_2,\ldots\}$ which form a base of the topology of $X$. 
    Moreover, we say that $X$ is \emph{locally compact} if every point $x\in X$ has a compact neighbourhood, that is, there exists an open set $U$ and a compact set $K$, such that $x\in U\subseteq K$.
\end{defi}
\begin{defi}\label{def_com_Lyap}
A \emph{complete Lyapunov function} for $f:X\rightarrow X$ is a continuous map $L: X \rightarrow \mathbb{R}$
with the following properties:
\begin{itemize}
    \item[(i)] $L(f(x)) \le L(x)$ for all $x\in X$ , with equality if and only if $x\in CR_f$.
    \item[(ii)] $L$ is constant on each chain component, that is, for every $[x]\in \mathfrak{C}_f$ and for every $u,v\in [x]$ 
    $$
    L(u)= L(v),
    $$
    and takes different values on different chain components.
    \item[(iii)] If $[x]$ and $[y]$ are distinct chain components such that $x\,\mathcal{C}\, y$, then $L(x) > L(y)$.
    \item[(iv)] $L(CR_f)$ is a compact nowhere dense subset of $\mathbb{R}$.
\end{itemize}
\end{defi}

\noindent We will exploit some facts from Conley’s theory (\cite{co78, mischaikow}). In particular, the following theorem holds (see \cite[Theorem 4.9]{norton}).
\begin{thm}\label{conley_th}
Let $X$ be a compact metric space
and let $f:X\rightarrow X$ be a continuous function. Then there is a complete Lyapunov function for $f$.

\qed
\end{thm}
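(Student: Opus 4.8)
The plan is to reconstruct the classical Conley-theoretic argument: build $L$ from ``local'' Lyapunov functions attached to attractor--repeller pairs, and then glue them with a rapidly decaying base-$3$ weighted sum so that the image automatically lands in a Cantor set. First I would set up the attractor--repeller machinery for the map $f$. Call an open set $U\subseteq X$ a \emph{trapping region} if $\overline{f(U)}\subseteq \mathrm{int}\,U$, and to each such $U$ associate the attractor $A=\bigcap_{n\ge 0}\overline{f^{\,n}(U)}$ and its dual repeller $A^{*}=\{x\in X:\omega(x)\cap A=\emptyset\}$, where $\omega(x)=\bigcap_{k\ge 0}\overline{\{f^{\,n}(x):n\ge k\}}$. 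The facts I would rely on --- the core of Conley theory, for which I would cite \cite{co78,norton} --- are: (a) $A$ and $A^{*}$ are disjoint, closed and $f$-invariant, every forward orbit accumulates in $A\cup A^{*}$, and for small $\varepsilon$ every $\varepsilon$-chain issued from a point of $A$ stays inside $U$ and hence never reaches $A^{*}$; (b) $CR_f=\bigcap_{A}(A\cup A^{*})$, the intersection ranging over all attractors, so in particular $CR_f$ is closed; (c) for $x,y\in CR_f$ one has $x\,E\,y$ if and only if $x$ and $y$ lie in exactly the same attractors, and whenever $x\,\mathcal C\,y$ with $[x]\ne[y]$ there is an attractor containing $[y]$ but not $[x]$.

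Next I would produce, for a single pair $(A,A^{*})$, a continuous function $g_{(A,A^{*})}\colon X\to[0,1]$ that is nonincreasing along orbits, equals $0$ exactly on $A$, equals $1$ exactly on $A^{*}$, and is \emph{strictly} decreasing along the orbit of every point outside $A\cup A^{*}$. I would take a Urysohn function $g_{0}$ with $g_{0}^{-1}(0)=A$ and $g_{0}^{-1}(1)=A^{*}$, set $g_{1}(x)=\sup_{n\ge 0}g_{0}(f^{\,n}(x))$, and then $g_{(A,A^{*})}(x)=\sum_{n\ge 0}2^{-n-1}g_{1}(f^{\,n}(x))$. The verifications are: $g_{1}$ is lower semicontinuous, being a supremum of continuous maps, and upper semicontinuous because off $A^{*}$ forward orbits enter $U$ and converge to $A$ uniformly on a neighbourhood, so $g_{1}$ is continuous; clearly $g_{1}(f(x))\le g_{1}(x)$, $g_{1}^{-1}(0)=A$ and $g_{1}^{-1}(1)=A^{*}$; and then $g_{(A,A^{*})}$ is continuous by uniform convergence, satisfies $g_{(A,A^{*})}(x)-g_{(A,A^{*})}(f(x))=g_{1}(x)-g_{(A,A^{*})}(x)\ge 0$, and this difference vanishes exactly when $g_{1}$ is constant along the forward orbit of $x$, which for $x\notin A\cup A^{*}$ is impossible since $g_{0}(f^{\,n}(x))\to 0$ forces the supremum defining $g_{1}$ to be attained at a last index and then to drop strictly.

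Finally I would use that $X$ has only countably many attractors (each is $\bigcap_{n}\overline{f^{\,n}(U)}$ for a trapping region $U$, and $U$ may be shrunk to a finite union of members of a fixed countable basis of $X$), list them as $A_{1},A_{2},\dots$ with dual repellers $A_{n}^{*}$, put $g_{n}=g_{(A_{n},A_{n}^{*})}$, and define
\[
L(x)=\sum_{n\ge 1}\frac{2\,g_{n}(x)}{3^{\,n}}.
\]
Continuity follows from uniform convergence. Property (i) holds because each $g_{n}$ is nonincreasing along orbits and, by the single-pair construction, $g_{n}(f(x))=g_{n}(x)$ exactly when $x\in A_{n}\cup A_{n}^{*}$; hence $L(f(x))=L(x)$ exactly when $x\in\bigcap_{n}(A_{n}\cup A_{n}^{*})=CR_f$. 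For (ii) and (iii) I would note that for $x\in CR_f$ every $g_{n}(x)$ lies in $\{0,1\}$, so $L(x)$ is essentially the base-$3$ encoding of the attractor-membership pattern of $x$: equal patterns give equal values, while a smallest-index comparison --- the whole tail $\sum_{n>m}2\cdot 3^{-n}=3^{-m}$ is dominated by the single term $2\cdot 3^{-m}$ --- shows that distinct patterns give distinct values, so by (c) $L$ is constant on each chain component and takes distinct values on distinct chain components; moreover, if $x\,\mathcal C\,y$ then $g_{n}(x)\ge g_{n}(y)$ for every $n$ by (a), with strict inequality at the separating index from (c), whence $L(x)>L(y)$. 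Property (iv) is then automatic: $L(CR_f)$ is compact, being the continuous image of the closed set $CR_f\subseteq X$, and it is contained in the middle-thirds Cantor set $\{\sum_{n\ge 1}2a_{n}3^{-n}:a_{n}\in\{0,1\}\}$, hence nowhere dense.

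The step I expect to be the real obstacle is the attractor--repeller package behind the first paragraph --- establishing $CR_f=\bigcap_{A}(A\cup A^{*})$ and the refined separation and order statement (c), together with the continuity of the auxiliary function $g_{1}$; once these are granted, the assembly through the base-$3$ weighted sum is routine.
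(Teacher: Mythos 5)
The paper does not prove this statement at all---it imports it from Conley theory, citing Norton's Theorem~4.9 \cite{norton}---and your sketch reproduces essentially that standard proof: attractor--repeller pairs $(A,A^{*})$ with their pairwise Lyapunov functions, countability of the family of attractors, and the $3^{-n}$-weighted sum that forces $L(CR_f)$ into the middle-thirds Cantor set, which is exactly property (iv$'$) used later in the paper. Your outline is correct, and the ingredients you defer (the identity $CR_f=\bigcap_A(A\cup A^{*})$, the characterization of chain equivalence via attractor membership, continuity of $g_1$, and countability of attractors) are precisely the standard facts established in the sources the paper itself cites, so nothing further is needed.
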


\noindent A central result of this work is the tight incompatibility between continuity and dense linear ordering: a countable chain components poset that is both linearly and densely ordered cannot arise from a continuous dynamical system. 
Note that the result is sharp in the following sense: at the moment either continuity or density is relaxed, allowing a countable discontinuity set, such posets become realizable. 
In particular, dropping continuity allows for a countable densely ordered poset, while dropping density allows for one under continuity.

\noindent A second result shows a similarly sharp contrast regarding minimal elements: while in non-compact metric spaces it is possible to construct systems whose chain components poset has no minimal elements, compactness forces their existence. In any compact metric space, the presence of at least one minimal element is unavoidable.

\noindent A further result of this work concerns the range of well-orders that can be realized through continuous interval maps. 
We show that every order type of the form $(\lambda + 1)$, with $\lambda < \omega_1$, arises as chain components poset of a suitably constructed continuous dynamical system on the interval \([0,1]\). 
The restriction \(\lambda < \omega_1\) is essential in our construction, as it relies on the existence of a countable cofinal sequence in $\lambda$. 
This result illustrates the richness of continuous dynamics even under strong topological constraints.

\section{Linearly ordered structures in $\mathcal{C}$-posets of continuous systems}
Let us first establish some results for continuous dynamical systems. 
In the following results, we show that: 
\begin{enumerate}
    \item On the one hand, if the space is compact, then the chain recurrent set cannot have a closed subset that generates a subset of the $\mathcal{C}$-poset that is linearly and densely ordered (Theorem~\ref{main}). 
    In particular, the $\mathcal{C}$-poset of the system cannot be linearly and densely ordered, and the analog applies to each of its subsystems (Corollary~\ref{cor_subsystem}). 
    Moreover, Theorem~\ref{main} is also true without assuming compactness, if some conditions are verified (Theorem~\ref{th_cc_loccomp}).
    \item On the other hand, every countable well-order with a maximum is the order type of the $\mathcal{C}$-poset of a continuous interval map (Theorem~\ref{ordinal_order_type}).
\end{enumerate} 

\subsection{Compact spaces}

\begin{thm}\label{main}
Let $X$ be a compact metric space and $f:X\to X$ be a continuous function. Then there is no closed set $J_f\subseteq CR_f$ such that the poset $(J_f/E,\preceq)$ is linearly and densely ordered. 
In particular, the poset $(\mathfrak{C}_f,\preceq)$ cannot be linearly and densely ordered.
\end{thm}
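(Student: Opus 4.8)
The key tool is the complete Lyapunov function $L:X\to\mathbb{R}$ supplied by Theorem \ref{conley_th}. My plan is to argue by contradiction: suppose $J_f\subseteq CR_f$ is closed and the quotient poset $(J_f/E,\preceq)$ is linearly and densely ordered. Since $X$ is compact and $J_f$ is closed, $J_f$ is compact, so $L(J_f)$ is a compact subset of $\mathbb{R}$. By property (ii) of $L$, $L$ is constant on each chain component and takes distinct values on distinct components, so $L$ induces a well-defined injection $\bar{L}:J_f/E\to\mathbb{R}$. By property (iii) (together with (ii)), $[x]\prec[y]$ in $\mathfrak{C}_f$ implies $L(y)>L(x)$; hence $\bar{L}$ is a strictly \emph{order-reversing} injection from the linearly ordered set $(J_f/E,\preceq)$ into $(\mathbb{R},<)$. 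Consequently $\bar{L}$ is an order isomorphism onto its image $A:=L(J_f)\subseteq\mathbb{R}$ (with the reversed order), and since $(J_f/E,\preceq)$ is densely ordered, $A$ must be densely ordered as a subset of $\mathbb{R}$.

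The contradiction will come from confronting two facts about $A=L(J_f)$. First, $A$ is compact: it is a closed bounded subset of $\mathbb{R}$. Second, by property (iv) of the complete Lyapunov function, $L(CR_f)$ is nowhere dense in $\mathbb{R}$, and since $J_f\subseteq CR_f$ we get $A\subseteq L(CR_f)$, so $A$ is nowhere dense in $\mathbb{R}$. But a compact, densely ordered subset of $\mathbb{R}$ with more than one point must contain an interval: indeed, if $a=\inf A$ and $b=\sup A$ with $a<b$ (both attained by compactness), then density forces $A$ to be dense in $[a,b]$, and compactness (closedness) then forces $[a,b]\subseteq A$, contradicting nowhere-density. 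The only remaining possibility is $|A|\le 1$, i.e. $|J_f/E|\le 1$, which cannot support a dense linear order (a densely ordered set, by the remark in the excerpt, is infinite). This yields the contradiction, proving the first assertion. The "in particular" clause follows by taking $J_f=CR_f$, which is closed because $CR_f$ is always a closed subset of $X$ (the chain relation is closed), and compact since $X$ is.

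I expect the main technical point to be the clean passage from the abstract poset hypothesis to the statement about $A\subseteq\mathbb{R}$ — specifically, verifying that $\bar{L}$ is genuinely order-reversing on the whole linear order $(J_f/E,\preceq)$, not just on comparable pairs that happen to be related by a single $\varepsilon$-chain. Here linearity of the order is what makes this painless: any two distinct classes are comparable, and property (iii) applies to each comparable pair. A minor subtlety to address is that $\preceq$ on $J_f/E$ is the order \emph{induced} from $\mathfrak{C}_f$ via representatives; one should note that $J_f$ being a union of chain components (equivalently, $E$-saturated) is not needed, since $E$ restricted to $J_f$ still has the property that $L$ separates its classes, and the chain relation used to define $\preceq$ is the ambient one on $X$. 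Everything else — compactness of $L(J_f)$, the interval-containment lemma for compact dense subsets of $\mathbb{R}$ — is routine real analysis.
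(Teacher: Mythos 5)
Your proposal is correct and takes essentially the same route as the paper: both arguments use the complete Lyapunov function, the compactness of $L(J_f)$, and the fact that the induced injection turns the dense linear order into a compact, densely ordered subset of $\mathbb{R}$, which clashes with property (iv) (the paper phrases this last clash as ``linear continuum, hence connected'' versus total disconnectedness of a subset of the Cantor set, while you give the equivalent elementary argument that such a set must be a nondegenerate interval, contradicting nowhere density). One harmless slip: since $[x]\prec[y]$ gives $L(x)<L(y)$, the induced map $\bar{L}$ is order-\emph{preserving}, not order-reversing, but this does not affect the argument.
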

\begin{proof}
    Let $(X,f)$ be a continuous dynamical system with $X$ compact metric space.
    Suppose, towards a contradiction, that there is a closed set $J_f\subseteq CR_f$ such that the poset $(J_f/E,\preceq)$ is linearly and densely ordered. 
    We set $\mathfrak{J}_f=J_f/E$.
    
   \noindent By Theorem~\ref{conley_th}, there exists a complete Lyapunov function for $f$, that is, a continuous function
    \[
    L : X\to \mathbb{R}
    \]
    that verifies the properties (i), (ii), (iii) and (iv) of Def.~\ref{def_com_Lyap}. 
    In particular, property (iv) is satisfied because it is possible to construct the function $L$ so that it verifies the following property (see for instance \cite[Theorem 4.9]{norton}):
    \begin{itemize}
    \item[(iv$'$)] $L(CR_f)$ is a closed subset of the Cantor middle-third set and, in particular it is a compact nowhere dense subset of $\mathbb{R}$.
    \end{itemize}
    So, specifically, $L(CR_f)$ is a totally disconnected, perfect subset of the real numbers. 

    \noindent Then, the restriction $L':J_f\to\mathbb{R}$ of $L$ to $J_f$ is also continuous and by the properties (ii) and (iii) of $L$ we have that $L'$ verifies the following:
    \begin{enumerate}
        \item[(1)] $L'$ is constant on each chain component in $\mathfrak{J}_f$ and takes different values on different chain components.
        \item[(2)] If $[x]$ and $[y]$ are distinct chain components such that $x\,\mathcal{C}\, y$, then $L'(x)>L'(y)$.
    \end{enumerate}
    Since $J_f$ is closed in the compact metric space $X$, it is compact and $L'(J_f)=L(J_f)$ is also compact. 
    Moreover, by (iv$'$) we see that $L'(J_f)=L(J_f)\subseteq L(CR_f)$ is a subset of the Cantor middle-third set. Thus the map $L'$ verifies that:
    \begin{enumerate}
        \item[(3)] $L'(J_f)$ is a closed subset of the Cantor middle-third set and, in particular it is a compact nowhere dense subset of $\mathbb{R}$. 
    \end{enumerate}
    
    \noindent Let $\pi:J_f\to \mathfrak{J}_f$ be the canonical quotient map.
    Since $L'$ satisfies (1), by Theorem~\ref{prop_quot}, there exists a continuous map $\widetilde{L'}:\mathfrak{J}_f\to \mathbb{R}$ such that $\widetilde{L'}\circ\pi=L'$.
    
    \noindent The map $\widetilde{L'}$ is injective. Indeed, let $[x],[y]\in \mathfrak{J}_f$ be two distinct chain components.
    Since $L'$ verifies (1) we have:
    $$
    \widetilde{L'}([x])=\widetilde{L'}(\pi(x))=L'(x)\neq L'(y)=\widetilde{L'}(\pi(y))=\widetilde{L'}([y]).
    $$
    Therefore, $\widetilde{L'}$ is a continuous bijective map between $\mathfrak{J}_f$ and $\widetilde{L'}(\mathfrak{J}_f)$.
    Moreover, $\widetilde{L'}$ is an order-isomorphism between $(\mathfrak{J}_f,\preceq)$ and $(\widetilde{L'}(\mathfrak{J}_f),\le)$. Indeed, for $[x],[y]\in \mathfrak{J}_f$, if $[y]\preceq [x]$, then
$$
\widetilde{L'}([y])=\widetilde{L'}(\pi(y))=L'(y)\le L'(x)=\widetilde{L'}(\pi(x))=\widetilde{L'}([x]),
$$
where the inequality follows from properties (1) and (2) satisfied by $L'$.

\noindent Therefore, since $(\mathfrak{J}_f,\preceq)$ is densely ordered, it follows that $(\widetilde{L'}(\mathfrak{J}_f),\le)$ is also densely ordered. 
By property (3) we have, moreover,  that 
\begin{equation}
\label{totdics}
\widetilde{L'}(\mathfrak{J}_f)=\widetilde{L'}(\pi(J_f))=L'(J_f)\ \text{is a compact nowhere dense subset of $\mathbb{R}$}.
\end{equation}
\noindent Now, we prove that since $\widetilde{L'}(\mathfrak{J}_f)$ is compact and densely ordered in $\mathbb{R}$, then $\widetilde{L'}(\mathfrak{J}_f)$ is an interval in $\mathbb{R}$, which contradicts \eqref{totdics}.

\noindent Suppose, towards a contradiction, that $\widetilde{L'}(\mathfrak{J}_f)$ is not an interval.
Then there is $w\in\mathbb{R}\setminus \widetilde{L'}(\mathfrak{J}_f)$ such that $a<w<b$ for some $a,b\in \widetilde{L'}(\mathfrak{J}_f)$. 
Define 
$$
s:=\sup\{ x\in \widetilde{L'}(\mathfrak{J}_f) : x<w\}
\quad,\quad r:=\inf\{x\in \widetilde{L'}(\mathfrak{J}_f) : x>w\}.
$$
Since $\widetilde{L'}(\mathfrak{J}_f)$ is closed, we have $s,r\in \widetilde{L'}(\mathfrak{J}_f)$. 
Since $\widetilde{L'}(\mathfrak{J}_f)$ is densely ordered, there exists $z\in \widetilde{L'}(\mathfrak{J}_f)$ such that $s<z<r$. 
In particular, $z\neq w$.\\
If $z<w$, then $z\in \{x\in \widetilde{L'}(\mathfrak{J}_f) : x<w\}$, and by definition of $s$ we have $z\le s$, a contradiction. 
Similarly, if $z>w$, then $z\in \{x\in \widetilde{L'}(\mathfrak{J}_f) : x>w\}$, from which follows that $z\ge r$, again a contradiction.
Hence $\widetilde{L'}(\mathfrak{J}_f)$ is an interval in $\mathbb{R}$.
This contradicts the assumption that $\widetilde{L'}(\mathfrak{J}_f)$ is nowhere dense (see Eq.~\eqref{totdics}), and the proof is complete.  
\end{proof}

\noindent The following result is a straightforward consequence of the previous theorem.

\begin{cor}\label{cor_subsystem}
Let $X$ be a compact metric space and $f:X\rightarrow X$ be a continuous map. Then there is no subsystem $(Y,f|_Y)$ such that the poset $(\mathfrak{C}(Y,f),\preceq)$ is linearly and densely ordered.    
\end{cor}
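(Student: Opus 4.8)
The plan is to notice that the notion of being a compact metric space equipped with a continuous self-map is inherited by subsystems, so that Theorem~\ref{main} applies verbatim to the subsystem. Concretely, let $(Y,f|_Y)$ be any subsystem of $(X,f)$. By Definition~\ref{def_subsy}, $Y$ is closed in $X$ and $f$-invariant. Since $X$ is compact and $Y$ is closed in $X$, the set $Y$ is compact; as a subspace of a metric space it is itself metric; and since $f(Y)\subseteq Y$, the restriction $f|_Y\colon Y\to Y$ is a well-defined continuous map. Thus $(Y,f|_Y)$ satisfies the hypotheses of Theorem~\ref{main}.

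Applying Theorem~\ref{main} to $(Y,f|_Y)$ — specifically the ``in particular'' clause of its statement — gives directly that the poset $(\mathfrak{C}(Y,f),\preceq)=(\mathfrak{C}_{f|_Y},\preceq)$ cannot be linearly and densely ordered, which is exactly the claimed conclusion. (Alternatively, one could take $J_f=CR(Y,f|_Y)$, observe that it is a closed subset of $Y$ with $J_f/E=\mathfrak{C}(Y,f)$, and quote the first part of Theorem~\ref{main}; but routing through the ``in particular'' clause avoids even needing closedness of the chain recurrent set.)

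There is no real obstacle here: all the mathematical content lives in Theorem~\ref{main}, and the corollary reduces to the trivial observations that a closed subset of a compact space is compact and that invariance of $Y$ is precisely what makes $f|_Y$ a self-map, so that the $\mathcal{C}$-poset of the subsystem is even defined. The only thing worth writing explicitly is this verification that the hypotheses of Theorem~\ref{main} transfer to $(Y,f|_Y)$.
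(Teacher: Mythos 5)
Your proposal is correct and matches the paper's (unwritten) argument: the paper states the corollary as a straightforward consequence of Theorem~\ref{main}, and your verification that a closed invariant $Y$ in a compact metric space is itself a compact metric space with continuous self-map $f|_Y$, so that the ``in particular'' clause of Theorem~\ref{main} applies to $(Y,f|_Y)$, is exactly the intended route. No issues.
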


    \noindent Theorem \ref{main} is perhaps a bit subtler than it may look at first sight. Notice, in particular, that it does \emph{not} prevent the existence of a densely and totally ordered subset of the $\mathcal{C}$-poset, and thus it does not prove that the chain components poset is scattered. To show the point, consider the following example. 
    
\begin{exa}\label{cantor__}
    It is possible to have a dynamical system whose chain components poset has a linearly ordered, uncountable (cardinality $2^{\aleph_0}$) subset. 
    Consider, for instance, a continuous function $f:[0,1]\rightarrow [0,1]$ such that $f(x)=x$ for all $x\in C$ where $C$ is the Cantor middle-third set and $f$ is strictly increasing and $f(x)<x$ if $x\in [0,1]\setminus C$. Recalling that $[0,1]\setminus C$ is a countable union of nonempty disjoint open intervals, it is clear that a function $f$ with the above properties can be made continuous. 
    In this example, we have that $CR_f=C$ and that every point in $C$ is a distinct chain component in $\mathfrak{C}_f$, that is, $\mathfrak{C}_f=\{ \{x\} : x\in C\}$. 
    In particular, for $x,y\in C$ we have $[x]\prec [y]$ whenever $x<y$.
\end{exa}

Theorem \ref{main} tells us that: 
    \begin{itemize}
        \item the chain components poset in Example \ref{cantor__} cannot be linearly and densely ordered, and indeed $CR_f=C$ is not densely ordered (in the sense of Def.~\ref{def_dens_ord_}); indeed, if $x$ and $y$ are the two endpoints of any of the open intervals $(x,y)$ removed in the iterative construction of $C$, we have that $(x,y)\cap C=\emptyset$. Thus $\mathfrak{C}_f$ is not densely ordered with respect to $\preceq$;
        \item on the other hand, if we remove from $C$ the set 

    $$\bigcup_{n=1}^{\infty} \left\{ \frac{m}{3^{n+1}} \mid m \in \mathbb{N}, \, m \text{ odd}, \, m < 3^{n+1} \right\},$$
    
we obtain a subset $J\subsetneq CR_f$ which is densely ordered in $\mathbb{R}$ and so $(J/E,\preceq)$ is linearly and densely ordered. 
Notice, however, that $J$ \emph{is not closed}, which is of course consistent with Theorem~\ref{main}.
    \end{itemize}

\noindent After establishing limitations imposed by continuity and compactness, we now turn to a positive result: we show that every ordinal of the form $\lambda+1$, with $\lambda<\omega_1$,  can be realized as the order type of the poset of chain components of a continuous map $f_\lambda:[0,1]\to [0,1]$. This result illustrates that, although dense linear orders are excluded under continuity, there is still ample room for highly structured linear orderings to arise.
\begin{defi}
    Suppose that $S$ is a set and $\{f_\iota:X_\iota\to X_\iota\}_{\iota\in S}$ is a collection of functions. Set $X:=\cup_{\iota\in S}X_\iota$. Assume that, for every $\iota,\xi\in S$, whenever $x\in X_\iota\cap X_\xi$ we have $f_\iota(x)=f_\xi(x)$. 
    
   \noindent  We indicate by $\bigcup_{\iota\in S}f_\iota$ the function $f:X\to X$ defined as $f(x):=f_\iota(x)$ if $x\in X_\iota$.
\end{defi}

\begin{thm}\label{ordinal_order_type} 
For every countable ordinal $ \lambda<\omega_1$, there is a continuous dynamical system $([0,1],f_\lambda)$ such that the poset $(\mathfrak{C}_{f_\lambda},\preceq)$ has order type $(\lambda+1, \in)$.
\end{thm}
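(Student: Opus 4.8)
The plan is to realize $(\lambda+1,\in)$ by a direct construction on $[0,1]$, in the same spirit as Example~\ref{cantor__}: fix a closed set $F\subseteq[0,1]$ order-isomorphic to $(\lambda+1,\in)$, declare every point of $F$ to be a fixed point of $f_\lambda$, and let $f_\lambda$ strictly decrease on each of the (countably many) complementary open intervals, mapping each of them into itself. Concretely, for $\lambda=0$ one simply takes $f_\lambda=\mathrm{id}_{[0,1]}$, whose $\mathcal{C}$-poset is a single point. For $\lambda\ge 1$ one arranges $0=\min F$ and $1=\max F$, writes $[0,1]\setminus F=\bigsqcup_k(a_k,b_k)$ with $a_k,b_k\in F$, and sets $f_\lambda(x)=x$ for $x\in F$ and $f_\lambda(x)=x-\tfrac12\min(x-a_k,\,b_k-x)$ for $x\in(a_k,b_k)$. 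One then checks the routine facts: the correction term is bounded by $\tfrac12\,\mathrm{dist}(x,F)$, so $f_\lambda$ is continuous; $0\le f_\lambda(x)\le x\le 1$, so $f_\lambda$ maps $[0,1]$ into itself; $f_\lambda(x)<x$ exactly off $F$; and a short computation in the two regimes of the $\min$ shows $f_\lambda((a_k,b_k))\subseteq(a_k,b_k)$, in fact $f_\lambda(x)\in(a_k,x)$ there.

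Next I would identify $CR_{f_\lambda}=F$. Points of $F$ are fixed, hence chain recurrent, so $F\subseteq CR_{f_\lambda}$. Conversely, given $x_0\notin F$, the set $f_\lambda([0,x_0])$ is a compact subset of $[0,x_0)$ (since $f_\lambda\le\mathrm{id}$ and $f_\lambda(x_0)<x_0$), so $\eta:=x_0-\max f_\lambda([0,x_0])>0$; then for $\varepsilon<\eta$, any $\varepsilon$-chain issued from $x_0$ satisfies $x_1<f_\lambda(x_0)+\varepsilon\le\max f_\lambda([0,x_0])+\varepsilon<x_0$, and inductively $x_i\in[0,x_0)$ for all $i\ge 1$, so it never returns to $x_0$; hence $x_0\notin CR_{f_\lambda}$. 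The same trapping argument, with an interval $[0,t]$ for a suitable $t$, is the one engine behind all the chain-recurrence estimates.

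Then I would show that $x\mapsto[x]$ is an order isomorphism from $(F,\le)$ onto $(\mathfrak{C}_{f_\lambda},\preceq)$. Surjectivity is $CR_{f_\lambda}=F$. For $x<y$ in $F$: (a) $y\,\mathcal{C}\,x$, by building, for each $\varepsilon>0$, the finite descending chain $z_0=y$, $z_{i+1}=\max\bigl(x,\,f_\lambda(z_i)-\varepsilon/2\bigr)$ stopped when it hits $x$: each step is legitimate and decreases the value by at least $\varepsilon/2$ (so the chain is finite), and the value stays $\ge x$ because whenever $z_i>x$ one has $f_\lambda(z_i)\ge x$ — if $z_i\in F$ this is clear, and if $z_i\in(a_k,b_k)$ then $x\le a_k$ and $f_\lambda(z_i)>a_k$; (b) $\neg\,(x\,\mathcal{C}\,y)$: since $F$, being order-isomorphic to a well-order, contains no interval, it is nowhere dense, so one may pick $t\in(x,y)\setminus F$, and then (as in the previous paragraph) for $\varepsilon$ small every $\varepsilon$-chain from $x$ stays in $[0,t)$, never reaching $y>t$. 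Hence $[x]\prec[y]$ for $x<y$, and with the convention $[x]\preceq[y]\iff y\,\mathcal{C}\,x$ the map is an order isomorphism; therefore $(\mathfrak{C}_{f_\lambda},\preceq)\cong(F,\le)\cong(\lambda+1,\in)$. Note this is fully consistent with Theorem~\ref{main}, since $\lambda+1$, being a well-order with at least two elements $>1$, is never densely ordered.

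The step I expect to be the real obstacle is producing the closed set $F\subseteq[0,1]$ with $(F,\le)\cong(\lambda+1,\in)$ and endpoints $0,1$. I would obtain it by transfinite induction on the (successor) ordinal $\beta=\lambda+1$, proving the stronger statement: every countable successor ordinal $\beta\ge 2$ is order-isomorphic to a closed subset of an arbitrary interval $[a,b]$ with minimum $a$ and maximum $b$ (and $\beta=1$ to $\{a\}$). The base case $\beta=2$ is $\{a,b\}$; if $\beta=\gamma+1$ with $\gamma$ a successor, place an isomorphic copy of $\gamma$ inside $[a,(a+b)/2]$ by the inductive hypothesis and add $\{b\}$ on top; if $\beta=\gamma+1$ with $\gamma$ a limit ordinal, use a strictly increasing cofinal sequence of successor ordinals in $\gamma$ to write $\gamma=\sum_{n\in\omega}\alpha_n$ as an ordinal sum of countable successor ordinals $\alpha_n<\gamma$, choose $c_n\uparrow b$ with $c_0=a$, place by induction an isomorphic copy of $\alpha_n$ inside $[c_n,(c_n+c_{n+1})/2]$, and take the union of all these together with $\{b\}$. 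The delicate verifications are that this union is closed (only finitely many pieces meet any set bounded away from $b$, each piece is closed, and $b$ is included) and that its order type is exactly $\gamma+1$; and it is precisely here — in the existence of the countable cofinal sequence used to decompose the limit ordinal $\gamma$ — that the hypothesis $\lambda<\omega_1$ is used.
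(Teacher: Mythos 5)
Your proposal is correct, but it is organized quite differently from the paper's proof, so a comparison is worthwhile. The paper runs the transfinite induction directly on the dynamics: it builds $f_\lambda$ from previously constructed maps, rescaling $f_\alpha$ into $[0,\frac12]$ and adjoining a two-fixed-point map at successor stages, and at limit stages writing $\lambda=\alpha+\omega^{\alpha_k}$ via Cantor normal form, choosing a cofinal sequence $\{\beta_j\}$ in the additively indecomposable part, and concatenating rescaled copies of the $f_{\beta_n}$ on blocks $[\frac{n}{n+1},\frac{n+1}{n+2}]$ accumulating at the fixed point $1$; the order type is tracked through the inductive invariants (strict monotonicity, $f_\alpha(x)<x$ off $CR_{f_\alpha}$, and compatibility of $\preceq$ with the real order). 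You instead separate the order-theoretic and dynamical content: a purely set-theoretic lemma (proved by transfinite induction, with the countable cofinal sequence entering exactly where it does in the paper, at limit stages) realizes $\lambda+1$ as a closed subset $F\subseteq[0,1]$ with endpoints $0$ and $1$, and then a single uniform map --- identity on $F$, a push-left map on each complementary interval, in the spirit of Example~\ref{cantor__} --- does all the dynamics at once, with $CR_{f_\lambda}=F$ and the order isomorphism verified directly by the trapping argument and the explicit descending $\varepsilon$-chains. Your limit-stage decomposition (telescoping a cofinal sequence of successor ordinals, so that $\gamma=\sum_n\alpha_n$ exactly) also avoids the Cantor normal form and the additive-indecomposability discussion used in the paper. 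What the paper's route buys is that the order structure of $\mathfrak{C}_{f_\lambda}$ comes for free from the inductive hypotheses, and the construction stays visibly inside the class of increasing interval bijections (which the paper exploits immediately afterwards for Corollary~\ref{cor_on_bijective_function}); what your route buys is a cleaner modularity --- the chain-recurrence analysis is done once, for one explicit map, rather than re-established through the induction --- and your verification of $CR_{f_\lambda}=F$, of $y\,\mathcal{C}\,x$ for $x<y$ in $F$, and of $\neg(x\,\mathcal{C}\,y)$ via a gap point $t\in(x,y)\setminus F$ is sound (your map is in fact also an increasing bijection, though you do not need or claim this). The only points I would tighten in writing it up are the routine ones you already flag: the closedness and order-type computation of the union at limit stages, and the observation that $\mathrm{dist}(x,F)=\min(x-a_k,b_k-x)$ on a gap, which is what makes the continuity estimate at points of $F$ immediate.
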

\begin{proof}
Let $\lambda<\omega_1$ be a countable ordinal. 
We will construct a countable collection 
    \[
    \Big \{([0,1],f_\alpha)\Big \}_{0\le\alpha\le\lambda}
    \]
    of continuous dynamical systems such that, for every $\alpha\le\lambda$, the $\mathcal{C}$-poset $(\mathfrak{C}_{f_\alpha},\preceq)$ has order type $(\alpha+1, \in)$.
    
    \noindent We start by defining $f_0\equiv id_{[0,1]}$. Therefore, we have $\mathfrak{C}_{f_0}=\{[0]\}$. 
    
    \noindent Let us now proceed by transfinite induction up to $\lambda$. Let us define the map $f_1:[0,1]\rightarrow [0,1]$ as $f_1(x)=x^2$.
    Since $f_1'(0)<1$ and $f_1'(1)>1$, it follows that $0$ and $1$ are, respectively, attractive and repulsive fixed points. Since $f_1$ is strictly increasing and $f(x)<x$ if $x\in (0,1)$, it follows that $\mathfrak{C}_{f_1}=\{[0],[1]\}$ and $[0]\prec [1]$. Therefore, the poset $(\mathfrak{C}_{f_1},\preceq)$ has order type $(2,\in)$. 
    Let us now assume that for every $1\le \alpha<\lambda$ there exists a continuous dynamical system $([0,1],f_\alpha)$ that verifies the following properties: 
 \begin{itemize}
     \item[(i)] $0$ and $1$ are fixed points for the map $f_\alpha$,
     \item[(ii)] $f_\alpha$ is strictly increasing and $f_\alpha(x)<x$ if $x\in [0,1]\setminus CR_{f_\alpha}$,
     \item[(iii)] $(\mathfrak{C}_{f_\alpha},\preceq)$ has order type $(\alpha+1,\in)$,
     \item[(iv)] for $[x],[y]\in \mathfrak{C}_{f_\alpha}$, if $x<y$ then $y\,\mathcal{C}\, x$, that is, $[x]\prec [y]$.
 \end{itemize}
\noindent We want to prove that there is a system $([0,1],f_\lambda)$ with the properties described above.
\begin{itemize}
    \item[Case 1.] $\lambda$ is a successor ordinal, that is $\lambda=\alpha+1$ for some ordinal $\alpha$.

    Let $f_\lambda:[0,1]\rightarrow [0,1]$ be the continuous map given by
    \begin{equation*}
    f_\lambda(x)=\begin{cases}
        \frac{1}{2} f_\alpha (2x) & \quad\text{if }x\in [0,\frac{1}{2}]\\
        x+(x-\frac{1}{2})(x-1) & \quad\text{if }x\in (\frac{1}{2},1],
    \end{cases}
\end{equation*}
where $f_\alpha:[0,1]\to [0,1]$ is defined by the inductive hypothesis.
By properties (ii) and (iv) of the system $([0,1],f_\alpha)$, one obtains that the system $([0,\frac 1 2],f_\lambda |_{[0,\frac{1}{2}]})$ also verifies (ii) and (iv). 
Since $([0,1],f_\alpha)$ verifies (i), it follows that $0$ and $\frac 1 2$ are fixed points for the map $f_\lambda$.
Moreover, we have that $(\mathfrak{C}([0, \frac 1 2], f_\lambda |_{[0,\frac{1}{2}]}),\preceq)$ has the same order type of $(\mathfrak{C}([0,1],{f_\alpha}),\preceq)$ which is, by (iii), equal to $(\alpha+1,\in)$. 

On the other hand, if we consider the system $([\frac 1 2,1],f_\lambda |_{[\frac{1}{2},1]})$, we see that $\frac{1}{2}$ and $1$ are fixed points, $f_\lambda|_{(\frac{1}{2},1)}$ is strictly increasing and $f_\lambda|_{[\frac{1}{2},1]}(x)<x$ for every $x\in (\frac 1 2,1)$. 
This implies that 
$\mathfrak{C}_{f_\lambda |_{[\frac{1}{2},1]}}=\{ [\frac 1 2],[1] \}$ and, in particular, $[\frac{1}{2}]\prec [1]$ and $[1]\not\prec [\frac{1}{2}]$. 

Therefore, the system $([0,1],f_\lambda)$ is such that $\mathfrak{C}_{f_\lambda}=\mathfrak{C}_{f_\lambda |_{[0,\frac{1}{2}]}}\cup \{[1]\}$ and $[x]\prec [1]$ for every $[x]\in \mathfrak{C}_{f_\lambda |_{[0,\frac{1}{2}]}}$.
Therefore, the $\mathcal{C}$-poset $(\mathfrak{C}_{f_\lambda},\preceq)$ has order type $(\alpha+2,\in)$ and so the system verifies properties (i), (ii), (iii) and (iv). 

\item[Case 2.] $\lambda$ is a limit ordinal. 

By using the Cantor normal form, we can write $\lambda=\omega^{\alpha_1}+\ldots+\omega^{\alpha_k}$ where $k\in\mathbb{N}$ and $\alpha_1\ge \ldots\ge \alpha_k$ are countable ordinals. 
In particular, since $\lambda$ is a limit ordinal we have that $\alpha_k\ge1$, so we can always take $1\le \alpha<\lambda$ such that $\lambda=\alpha+\omega^{\alpha_k}$. 
Moreover, there exists a sequence of ordinals $\{\beta_j\}_{j\in\mathbb{N}}$ such that $\sup_{j\in\mathbb{N}}\beta_j=\omega^{\alpha_k}$.
Note that $\omega^{\alpha_k}$ is additively indecomposable, that is $\sum_{i=1}^j \beta_i<\omega^{\alpha_k}$ for every $j\in\mathbb{N}$. Then, we have:
\begin{equation}\label{prop_sup_ord}
  \sup_{j\in\mathbb{N}}\beta_j= \sup_{j\in\mathbb{N}}\sum_{i=1}^j \beta_i= \omega^{\alpha_k}.  
\end{equation}
Consider the strictly increasing sequence $\{a_n
\}_{n\in\mathbb{N}_0}$ in $[0,1]$ where $a_n=\frac{n}{n+1}$ and for every $n\in\mathbb{N}_0$ let us define a continuous function $f_{\lambda,n}:[a_n,a_{n+1}]\to [a_n,a_{n+1}]$ as follows.

Let $f_{\lambda,0}:[0,\frac{1}{2}]\to[0,\frac{1}{2}]$ be given by 
$$
f_{\lambda,0}(x)=\frac{1}{2}f_\alpha(2x),
$$
where $f_\alpha:[0,1]\to [0,1]$ is defined by the inductive hypothesis.
For every $n\in\mathbb{N}$, let us define the function $f_{\lambda,n}:[a_n,a_{n+1}]\to [a_{n},a_{n+1}]$ as 
\[
f_{\lambda,n}(x)=\frac{f_{\beta_n}((n+1)(n+2)(x-a_n))}{(n+1)(n+2)}+a_n,
\]
where $f_{\beta_n}:[0,1]\to [0,1]$ is defined by the inductive hypothesis for every $n\in \mathbb{N}$.

By inductive hypothesis, the systems $([0,1],f_\alpha)$ and $\{([0,1],f_{\beta_n})\}_{n\in\mathbb{N}}$ verify properties (i), (ii), (iii) and (iv).
Then, for every $n\in\mathbb{N}_0$ the points $a_n$ and $a_{n+1}$ are fixed points for the map $f_{\lambda,n}$ and the system $([a_n,a_{n+1}],f_{\lambda,n})$ verifies (ii) and (iv).

Let us now define the continuous function $f_\lambda:[0,1]\rightarrow[0,1]$ as follows:
\[
f_\lambda=\left(\bigcup_{n\geq 0}f_{\lambda,n}\right)\cup\{(1,1)\}.
\]
By construction, we have:
\begin{equation}\label{poset_transf}
  \mathfrak{C}([0,1],{f_\lambda})=\bigcup_{n\ge 0} \mathfrak{C}([a_n,a_{n+1}],f_{\lambda,n})\cup \{[1]\}.  
\end{equation}
Notice that the system $([0,1],f_\lambda)$ verifies properties (i) and (ii).

If we consider the system $([0,\frac 1 2],{f_{\lambda,0}})$, we see that the poset $(\mathfrak{C}_{f_{\lambda,0}},\preceq)$ has the same order type of $(\mathfrak{C}([0,1],{f_\alpha}),\preceq)$ which is, by (iii), equal to $(\alpha+1,\in)$.
Analogously, the poset $(\mathfrak{C}([a_n,a_{n+1}],{f_{\lambda,n}}),\preceq)$ has order type $(\beta_n+1,\in)$ for every $n\in\mathbb{N}$.  

Consider the system $([0,1],f_\lambda)$ and the poset $(\mathfrak{C}_{f_\lambda},\preceq)$ where $\mathfrak{C}_{f_\lambda}$ is described in \eqref{poset_transf}. 
Since for every $n\in\mathbb{N}_0$ the system $([a_n,a_{n+1}],f_{\lambda,n})$ verifies (ii) and (iv), if $[x]\in \mathfrak{C}_{f_{\lambda,n}}$ and $[y]\in \mathfrak{C}_{f_{\lambda,m}}$ with $0\le n<m$, then $[x]\preceq [y]$. 
In particular, we have that $[x]\prec [1]$ for all $[x]\in \bigcup_{n\ge 0} \mathfrak{C}_{f_{\lambda,n}}$.
Therefore, the system $([0,1],f_\lambda)$ verifies property (iv) and the poset $(\mathfrak{C}_{f_\lambda},\preceq)$ has order type 
$$
\left(\alpha+\sup_{j\in\mathbb{N}}\sum_{i=1}^j \beta_i+1,\in \right)=(\lambda+1,\in),
$$
where the last equality follows by \eqref{prop_sup_ord}.
\end{itemize}
\end{proof}
\noindent Notice that if the map $f$ of a dynamical system $(X,f)$ is a bijection, then the order type of the poset $(\mathfrak{C}_f,\preceq)$ is the order type of $(\mathfrak{C}_{f^{-1}},\preceq^{-1})$, where $\preceq^{-1}$ is the inverse relation of $\preceq$.
Moreover, $\mathfrak{C}_f = \mathfrak{C}_{f^{-1}}$ because for every $x, y \in X$ it holds that
\[
x\,\mathcal{C}\, y \text{ via $f$} \iff y\,\mathcal{C}\, x \text{ via $f^{-1}$}.
\]
\noindent Since the map $f_\lambda$ in Theorem~\ref{ordinal_order_type} is bijective, we immediately obtain the following corollary.

\begin{cor}\label{cor_on_bijective_function}
For every countable ordinal $ \lambda<\omega_1$, there is a continuous dynamical system $([0,1],g_\lambda)$ such that
the poset $(\mathfrak{C}_{g_\lambda},\preceq)$ has order type $(\lambda+1, \ni)$.

\end{cor}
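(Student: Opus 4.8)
The plan is to simply take $g_\lambda$ to be the inverse of the map $f_\lambda$ produced in Theorem~\ref{ordinal_order_type}. First I would check that $f_\lambda$ is genuinely a bijection of $[0,1]$: by property (ii) of the inductive construction it is strictly increasing on $[0,1]$, and by property (i) it fixes both endpoints, $f_\lambda(0)=0$ and $f_\lambda(1)=1$; being continuous, the intermediate value theorem forces $f_\lambda([0,1])=[0,1]$, so $f_\lambda$ is a continuous, strictly increasing bijection. Consequently $g_\lambda:=f_\lambda^{-1}$ is well defined, and since a continuous bijection from the compact space $[0,1]$ to the Hausdorff space $[0,1]$ is a homeomorphism, $g_\lambda$ is continuous; thus $([0,1],g_\lambda)$ is a continuous dynamical system. (Alternatively, one can avoid the compact--Hausdorff fact and observe directly that the inverse of a strictly increasing continuous surjection of an interval is again strictly increasing and continuous.)

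Next I would invoke the two observations recorded just after the proof of Theorem~\ref{ordinal_order_type}. Because $f_\lambda$ is bijective, we have the set equality $\mathfrak{C}_{g_\lambda}=\mathfrak{C}_{f_\lambda^{-1}}=\mathfrak{C}_{f_\lambda}$, since for all $x,y\in[0,1]$ one has $x\,\mathcal{C}\,y$ via $f_\lambda$ if and only if $y\,\mathcal{C}\,x$ via $f_\lambda^{-1}$; moreover, the order type of $(\mathfrak{C}_{g_\lambda},\preceq)$ equals the order type of $(\mathfrak{C}_{f_\lambda},\preceq^{-1})$, where $\preceq^{-1}$ is the opposite of the chain-component order $\preceq$. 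Since Theorem~\ref{ordinal_order_type} asserts that $(\mathfrak{C}_{f_\lambda},\preceq)$ has order type $(\lambda+1,\in)$, reversing the order yields that $(\mathfrak{C}_{g_\lambda},\preceq)$ has order type $(\lambda+1,\ni)$, which is precisely the claim.

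There is essentially no hard part here: the corollary is a direct transfer of Theorem~\ref{ordinal_order_type} along order reversal, so the only points to nail down are the (routine) verification that $f_\lambda$ is an increasing surjection of $[0,1]$ and the standard fact that $g_\lambda=f_\lambda^{-1}$ is again a continuous self-map of $[0,1]$, so that $([0,1],g_\lambda)$ genuinely belongs to the class of continuous dynamical systems. If any subtlety arises, it would only be in making fully explicit that passing to the inverse map reverses the direction of all $\varepsilon$-chains and hence the relation $\preceq$, but this is exactly the content of the remark preceding the corollary.
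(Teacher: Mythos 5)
Your proposal is correct and follows essentially the same route as the paper: take $g_\lambda=f_\lambda^{-1}$ and invoke the remark preceding the corollary that for a bijective map the $\mathcal{C}$-poset of the inverse is the same set with the reversed order. The extra verification you supply (that $f_\lambda$ is a strictly increasing continuous bijection fixing $0$ and $1$, so $f_\lambda^{-1}$ is continuous) is exactly the routine step the paper leaves implicit.
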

\begin{proof} Since $f^{-1}_\lambda$ is continuous, considering $g_\lambda=f^{-1}_\lambda$ the statement of the corollary follows.
\end{proof}

\subsection{Non-compact spaces}
The compactness assumption in Theorem~\ref{main} plays a central role in constraining the structure of the ${\mathcal C}$-poset. It is interesting to understand whether similar obstructions hold when the phase space is no longer compact. This section is motivated by the desire to test the robustness of the obstruction under weaker topological assumptions.

\noindent Indeed, under certain conditions, the existence of a complete Lyapunov function (as guaranteed by Theorem~\ref{conley_th} for compact metric spaces) can be extended to more general non-compact settings (see \cite{hu91, hu92, hu98}).

\noindent In what follows, we focus on the class of locally compact and second countable metric spaces (see Def.~\ref{def_noncompact}). In this setting, we recover the impossibility of realizing a countable dense linear order under mild additional assumptions. This result is derived from a theorem by M. Hurley in \cite{hu91}.

\begin{thm}\label{th_cc_loccomp} Let $(X,d)$ be a locally compact, second countable metric space and let $f:X\to X$ be a continuous map. Let us suppose that at least one of the following two conditions hold:
\begin{enumerate}
\item[1.] $CR_f$ is strongly invariant, that is $f(CR_f)=CR_f$,
\item[2.] for each natural number $k\geq 1$,
\[
CR_f=\bigcup_{n\geq k}CR_{f^n},
\]
where $f^n=f\circ f\circ\ldots\circ f$ ($n$ times).
\end{enumerate}
Then the poset $(\mathfrak{C}_f,\preceq)$ cannot be linearly and densely ordered.
\end{thm}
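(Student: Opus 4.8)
The plan is to reduce the non-compact case to the compact setting of Theorem~\ref{main} by invoking Hurley's construction of a complete Lyapunov function under either of the two hypotheses. Specifically, under the stated assumptions on $(X,d)$ and $f$, Hurley's theorem in \cite{hu91} produces a continuous function $L:X\to\mathbb{R}$ enjoying exactly the defining properties (i)--(iii) of a complete Lyapunov function (Def.~\ref{def_com_Lyap}): $L$ is non-increasing along orbits with equality precisely on $CR_f$, it is constant on each chain component and separates distinct chain components, and $L$ strictly decreases across the order $\mathcal{C}$. The key point is that hypotheses 1 or 2 are precisely what Hurley needs to guarantee that such an $L$ exists in the locally compact, second countable (hence $\sigma$-compact and metrizable) setting, where the naive Conley construction may otherwise fail. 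I would state this as a cited lemma at the start of the proof.

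Next, mimicking the argument of Theorem~\ref{main}: suppose towards a contradiction that $(\mathfrak{C}_f,\preceq)$ is linearly and densely ordered. Since $L$ is constant on chain components and separates them, Theorem~\ref{prop_quot} yields a continuous injection $\widetilde{L}:\mathfrak{C}_f\to\mathbb{R}$ with $\widetilde{L}\circ\pi=L|_{CR_f}$, and properties (ii)--(iii) make $\widetilde{L}$ an order-isomorphism onto its image $S=\widetilde{L}(\mathfrak{C}_f)\subseteq\mathbb{R}$ with the usual order. Density of $(\mathfrak{C}_f,\preceq)$ transfers to $S$, so $S$ is a densely ordered subset of $\mathbb{R}$ with at least two points.

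The main obstacle — and the place where compactness was doing real work in Theorem~\ref{main} — is that $S$ is no longer automatically closed in $\mathbb{R}$, so we cannot directly conclude it is a linear continuum and derive a connectedness contradiction. To circumvent this, I would instead exploit the image-structure more carefully: Hurley's construction (or the analysis of $CR_f$ under hypotheses 1--2) should give that $L(CR_f)$ is still nowhere dense in $\mathbb{R}$ — indeed one typically arranges $L(CR_f)$ to be contained in a Cantor-type set, as in (iv$'$) — while a densely ordered $S$ contains, between any two of its points, a point of $S$, forcing $\overline{S}$ to contain a nondegenerate interval and hence have nonempty interior. That contradicts nowhere density of $L(CR_f)\supseteq S$. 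Thus the crux is verifying that the relevant nowhere-density property (iv), or its sharper Cantor-set form, survives in Hurley's non-compact construction under hypotheses 1 or 2; once that is in hand, the contradiction closes exactly as before, with the density/nowhere-density clash replacing the connectedness/total-disconnectedness clash of Theorem~\ref{main}.
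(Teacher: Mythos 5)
Your first step coincides with the paper's: under hypothesis 1 or 2, Hurley's results (\cite[Theorem~4.0 and Theorem~8.2]{hu91}) supply a complete Lyapunov function, and the paper's proof consists precisely of citing this and then repeating the argument of Theorem~\ref{main}. The crucial point, however, is that Hurley gives not merely properties (i)--(iii) plus nowhere density, but the full property (iv$'$): $L(CR_f)$ is a \emph{closed} subset of the Cantor middle-third set, hence compact and nowhere dense, and this holds without any compactness of $CR_f$. So the obstacle you single out (that $S=\widetilde{L}(\mathfrak{C}_f)=L(CR_f)$ might fail to be closed) is resolved by the strength of the cited construction itself, and once closedness is available the original conclusion goes through verbatim: a bounded, closed, densely ordered subset of $\mathbb{R}$ is a linear continuum, hence connected, contradicting its total disconnectedness.

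The genuine gap is in the workaround you substitute for that argument. The claim that a densely ordered $S\subseteq\mathbb{R}$ forces $\overline{S}$ to contain a nondegenerate interval is false: order-density (between any two points of $S$ there is a point of $S$) does not imply that $S$ is topologically dense in any interval. The paper itself provides a counterexample in the discussion following Example~\ref{cantor__}: the set $J\subset C$ obtained by removing countably many triadic points from the Cantor set is densely ordered, yet $\overline{J}\subseteq C$ is nowhere dense; more generally, any order-embedding of $\mathbb{Q}$ into the Cantor set yields a densely ordered, nowhere dense subset of $\mathbb{R}$. Consequently the ``density versus nowhere-density clash'' you propose does not exist, and nowhere density of $L(CR_f)$ alone cannot close the proof. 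To repair the argument, keep the citation to Hurley with property (iv$'$) in full, note that $\widetilde{L}(\mathfrak{C}_f)=L(CR_f)$ is then bounded, closed and densely ordered, and invoke the linear continuum/connectedness contradiction exactly as in Theorem~\ref{main}; no replacement of that final step is needed or possible along the lines you suggest.
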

\noindent The proof is analogous to the proof of Theorem \ref{main} because each of conditions 1 and 2 implies the existence of a complete Lyapunov function for $f$ that verifies property (iv$'$) (cf. \cite[Theorem~4.0, p.~720, and Theorem~8.2, p.~727]{hu91}).

\noindent We note that conditions 1 and 2 of Theorem~\ref{th_cc_loccomp} are implied by the conditions mentioned in (a)-(c) as described below. For a full discussion of this, see \cite[Theorem 4.0]{hu91}.
Let $f$ be a continuous function on a metric space $X$, then:
\begin{itemize}
\item[(a)] if $f$ is a homeomorphism, then $f$ is a proper map, which in turn implies condition 1;
\item[(b)] if $f$ is uniformly continuous, then
\[
CR_f=CR_{f^n} \text{ for all $n\geq 1$},
\]
from which condition 2 follows;
\item[(c)] if $X$ is locally compact and $CR_f$ is compact, then each chain component of $f$ is compact, which in turn implies condition 1.
\end{itemize}

\section{$\mathcal{C}$-posets in discontinuous systems}

In recent years increasing attention has been devoted to dynamical properties of highly discontinuous maps. The motivation for this research line has been mainly theoretical rather than applicative, and an important goal has been that of clarifying which specific properties (often weaker than full continuity) are used in proving classical results in topological dynamics. Generally, the focus is on some specific weakening of continuity rather than on the general case where no regularity assumption is made at all.
For instance, in the literature it has been considered the case of systems whose time evolution is only piecewise continuous (\cite{llibre2015birth,llibre2015maximum}), belongs to some low-order Baire class (\cite{steele17, steele18, steele19, steelebaire2}), or is quasicontinuous (\cite{crannellmartelli,crannel05}), Darboux (\cite{pawlakdarboux,pawentropy}), almost continuous (\cite{pawlak19,szuca10}). 
Among the works dealing with the general case, we mention \cite{ciklova}. 
For what concern specifically chain recurrence, it has been investigated for Baire class~1 and, more generally, Baire class~$\alpha$ maps (\cite{alikhani2019chain,alikhani2022iterates}).

\noindent In the following, we establish some results concerning discontinuous dynamical systems. First of all, we construct interval discontinuous systems whose $\mathcal{C}$-poset is countable and densely ordered, either with or without a maximum element (Theorems \ref{order_type_[0,1]} and \ref{order_nomin}). Then we prove that, without any regularity assumption on the map, any countable linearly ordered subset of the $\mathcal{C}$-poset must have a lower bound (Lemma \ref{lem_max} and Theorem \ref{thm_max}) and that the $\mathcal{C}$-poset has a minimal element (Remark~\ref{rem_minimal}).

\begin{defi}\label{def_interm}
Let $A,B\subseteq\mathbb{R}$ be two nonempty disjoint connected sets. 
We say $A<B$, if $x<y$ for all $x\in A$ and $y\in B$. 
We say that $C\subseteq \mathbb{R}$ nonempty connected set is \emph{intermediate} between $A$ and $B$, if $A<C<B$.
Notice that the sets $A$, $B$ and $C$ can also be a singleton.
\end{defi}

\noindent Let us define the following partial order relation on the power set of the unit interval $[0,1]$. 

\begin{defi}\label{def_order_power}
    Let $\mathcal{P}([0,1])$ be the power set of the unit interval. 
    Let us define a partial order relation on $\mathcal{P}([0,1])$, denoted by $\le_{\mathcal{P}}$, as follows.
    For $I,J\in \mathcal{P}([0,1])$, we say that $I\leq_{\mathcal{P}} J$ if and only if either $I=J$ or $x<y$ for every $x\in I$ and $y\in J$. We say that $I <_\mathcal{P} J$ if $I\le_\mathcal{P} J$ and $I\neq J$.
\end{defi}

\begin{thm}\label{order_type_[0,1]}
There is a discontinuous dynamical system $([0,1],f)$ such that the poset $(\mathfrak{C}_f,\preceq)$ has order type $([0,1]\cap\mathbb{Q},\le)$.
\end{thm}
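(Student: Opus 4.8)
The plan is to build a discontinuous self-map of $[0,1]$ whose chain recurrent set is exactly a countable dense subset of $[0,1]$ — for instance $[0,1]\cap\mathbb{Q}$ itself — with each rational forming its own chain component, ordered exactly as the reals order them. The construction should mimic Example~\ref{cantor__}, but replacing the (uncountable) Cantor set by the (countable) dense set $\mathbb{Q}\cap[0,1]$: every rational is a fixed point, while on the irrationals the map strictly decreases so that no irrational is chain recurrent and no pseudo-orbit can ``climb''. The obstruction to doing this continuously is precisely Theorem~\ref{main}; the point of the theorem is that relaxing continuity removes that barrier, so discontinuity is not merely tolerated but essential, and the map I describe will indeed be discontinuous at every point.

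\medskip

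\textbf{Construction.} Fix an enumeration $\{q_k\}_{k\in\mathbb{N}}$ of $\mathbb{Q}\cap[0,1]$ and define $f:[0,1]\to[0,1]$ by $f(x)=x$ if $x\in\mathbb{Q}\cap[0,1]$, and $f(x)=x-g(x)$ if $x\notin\mathbb{Q}$, where $g:[0,1]\setminus\mathbb{Q}\to(0,1]$ is chosen so that $0<g(x)<x$ (hence $0<f(x)<x$ for irrational $x$) and, crucially, so that from any irrational $x$ the forward orbit is eventually ``trapped'' below any given rational, in a way that prevents $\varepsilon$-chains from going upward. Concretely I would arrange $g$ so that for every irrational $x$ there is a rational $q<x$ with $f(x)<q$; for example, take $g(x)=x-\tfrac12\cdot\tfrac{p}{p+1}$ where $\tfrac{p}{p+1}$ is the largest fraction of that form below $x$ — any such explicit choice works, the only requirement being $0<f(x)<x$ on the irrationals. (One must check $f$ maps into $[0,1]$, which is immediate since $0<f(x)<x\le 1$.)

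\medskip

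\textbf{Identifying $CR_f$ and the chain components.} First, every rational $q$ is a fixed point, hence trivially chain recurrent, and forms its own $E$-class because $L$-type monotonicity (see below) shows distinct chain components get distinct ``levels''. Second, no irrational $x$ is chain recurrent: I would show that any $\varepsilon$-chain starting at $x$, for $\varepsilon$ small, cannot return to $x$. The key inequality is that there is a fixed rational gap — choose a rational $q\in(f(x),x)$; for $\varepsilon<\min\{x-q,\,q-f(x)\}$, the first step of any $\varepsilon$-chain lands in $(f(x)-\varepsilon,f(x)+\varepsilon)\subseteq(\,\cdot\,,q)$, i.e.\ strictly below $q$, and since $f(t)\le t$ everywhere with a uniform-enough decrease on a neighborhood, no subsequent step can push the chain back above $q$; hence it can never reach $x>q$. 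This needs a short lemma quantifying ``the chain cannot climb past a rational barrier'': for a rational $q$ and small $\varepsilon$, if $x_i<q$ then $f(x_i)\le x_i<q$ so $x_{i+1}<q+\varepsilon$, and by shrinking the relevant neighborhood one keeps the chain below $q$ permanently. Third, for two rationals $p<q$ one has $q\,\mathcal{C}\,p$ (so $[p]\prec[q]$): build an explicit $\varepsilon$-chain descending from $q$ to $p$ using that the irrationals in $(p,q)$ have $f$ strictly below the identity, so finitely many small steps carry $q$ down past $p$; combined with $[p]\not\succeq[q]$ from the barrier lemma, this gives $[p]\prec[q]$ exactly when $p<q$. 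Consequently $\mathfrak{C}_f=\{[q]:q\in\mathbb{Q}\cap[0,1]\}$ with $[p]\preceq[q]\iff p\le q$, so $(\mathfrak{C}_f,\preceq)$ has order type $(\mathbb{Q}\cap[0,1],\le)$.

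\medskip

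\textbf{Main obstacle.} The delicate point is the barrier lemma — proving rigorously that pseudo-orbits cannot climb past a rational, i.e.\ that irrationals are not chain recurrent and that the order on rationals is exactly the natural one. Because $f$ is wildly discontinuous, one cannot argue by continuity/uniform estimates globally; instead I would localize: fix the rational barrier $q$ first, then choose $\varepsilon$ after, exploiting that in a suitable punctured neighborhood $(q,q+\delta)$ of $q$ from above one can force $f$ to drop below $q$ by a definite amount (tune $g$ on that neighborhood during construction to guarantee this), so that with $\varepsilon$ small the chain, once below $q$, stays below. This ``choose the barrier, then the $\varepsilon$, then tune $g$ near the barrier'' ordering of quantifiers is what makes the argument go through, and arranging $g$ up front to support every such barrier simultaneously (countably many rationals, each needing a one-sided uniform drop) is the part requiring care — but it is feasible precisely because $\mathbb{Q}\cap[0,1]$ is countable.
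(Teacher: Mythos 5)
Your construction fails at the very first step, and the failure cannot be repaired by any tuning of $g$ on the irrationals. You make every rational of $[0,1]$ a fixed point. Fixed points are chain recurrent, and a \emph{dense} set of fixed points destroys all barriers: given rationals $p<q$ and any $\varepsilon>0$, choose rationals $p=r_0<r_1<\cdots<r_m=q$ with $r_{i+1}-r_i<\varepsilon$; since $f(r_i)=r_i$, the condition $d(f(r_i),r_{i+1})<\varepsilon$ holds, so $r_0,\ldots,r_m$ is an $\varepsilon$-chain \emph{climbing} from $p$ to $q$. Hence $p\,\mathcal{C}\,q$ and $q\,\mathcal{C}\,p$ for all rationals; in fact $x\,\mathcal{C}\,y$ for all $x,y\in[0,1]$ (first step from $x$ to a rational within $\varepsilon$ of $f(x)$, hop along rationals, last step from a rational within $\varepsilon$ of $y$), so $CR_f=[0,1]$ and $\mathfrak{C}_f$ is a single chain component --- the opposite of the desired poset. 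These climbing chains never visit an irrational, so no choice of $g$, however cleverly ``tuned near the barrier,'' affects them; your barrier lemma is simply false, because the quantifier order in the chain relation lets the chain overshoot any rational $q$ by less than $\varepsilon$ and land on a \emph{fixed} rational just above $q$, after which nothing pulls it back down. The same argument shows your claim that irrationals are not chain recurrent is also false in your construction.

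The paper's proof avoids exactly this trap: instead of dense fixed points, each intended chain component is the left endpoint $a_n$ of a nondegenerate closed interval $[a_n,b_n]$, the intervals being pairwise disjoint and arranged so that their \emph{order-theoretic} structure (under the interval order $\le_{\mathcal P}$) is that of $\mathbb{Q}\cap[0,1]$, while $f\equiv a_n$ on $[a_n,b_n]$ and $f\equiv 0$ on the complement of their union. The positive length $b_p-a_p$ is the barrier: for $\varepsilon<b_p-a_p$, the nearest fixed point above $a_p$ lies beyond $b_p$, and every point of $[a_p,b_p]$ is mapped back to $a_p$, so $\varepsilon$-chains cannot climb; downward chains from $a_q$ to $a_p$ exist because the intervals are chosen so that the measure of the complement of $\cup L_n$ tends to $0$, making the gaps between suitably chosen intermediate intervals smaller than $\varepsilon$. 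The moral, which your plan misses, is that the chain components cannot be realized as a topologically dense set of fixed points: each component must be shielded by a set of definite diameter on which the map collapses downward, and the required density is achieved only at the level of the abstract order on these shields, not as density of fixed points in $[0,1]$.
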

\begin{proof}
We want to define a dynamical system $([0,1],f)$ such that its chain components poset is order isomorphic to $(L,\leq_{\mathcal{P}})$, which is in turn order isomorphic to $([0,1]\cap \mathbb{Q}, \leq)$, where $L=\{[a_n,b_n]\}_{n\geq 0}$ is a countable set of pairwise disjoint intervals in $[0,1]$ ordered by the order relation $\leq_{\mathcal{P}}$ given by Def.~\ref{def_order_power}.

\noindent Set $[a_0,b_0]:=\left[0,\frac{1}{4}\right]$ and $[a_1,b_1]:=\left[\frac{3}{4},1\right]$. 
Let us define
\begin{align*}
L_0&:=\{[a_0,b_0],[a_1,b_1]\}=\left \{\left[0,\frac{1}{4}\right], \left[\frac{3}{4},1\right]\right \}\\
\\
L_1&:=\left\{[a_0,b_0], \left[a_2, b_2\right], [a_1,b_1]\right\}=L_0\cup\{[a_2,b_2]\},\\
\end{align*}
where $[a_2,b_2]$ is intermediate between $[a_0,b_0]$ and $[a_1,b_1]$. Notice that $[a_0,b_0] <_\mathcal{P} [a_2,b_2]<_\mathcal{P} [a_1,b_1]$.
Let us proceed by setting
\[
L_2:=\left\{[a_0,b_0],[a_3, b_3], [a_2,b_2],[a_4,b_4],[a_1,b_1]\right\}=L_1\cup\{[a_3,b_3],[a_4,b_4]\}
\]
where 
\begin{itemize}
\item $[a_3,b_3]$ is intermediate between $[a_0,b_0]$ and $[a_2,b_2]$, 
\item $[a_4,b_4]$ is intermediate between $[a_2,b_2]$ and $[a_1,b_1]$.
\end{itemize}
So, we see that $[a_0,b_0]<_\mathcal{P}[a_3,b_3]  <_\mathcal{P} [a_2,b_2]<_\mathcal{P}[a_4,b_4]<_\mathcal{P} [a_1,b_1]$.

\noindent Proceeding inductively, for $n\ge 2$, given $L_n=\{I_0, I_1,\cdots , I_{2^n}\}$ with $I_0<_\mathcal{P} I_1<_\mathcal{P}\cdots <_\mathcal{P} I_{2^n}$, we set
\[
L_{n+1}:=L_n\cup\{I_{2^n+1}, I_{2^n+2},\cdots , I_{2^{n+1}}\}
\]
where $I_{2^n+k}=[a_{2^n+k}, b_{2^n+k}]$ is intermediate between $I_{k-1}$ and $I_{k}$ for every $k\in\{1,\ldots,2^n\}$.
So, it holds that $I_{k-1} <_\mathcal{P} I_{2^n+k} <_\mathcal{P} I_k$ for every $k\in\{1,\ldots,2^n\}$.
Notice that $\left|L_n\right|=2^n+1$ for every $n\geq 0$, and that $L_n\subsetneq L_m$ whenever $n<m$.
Set 
$$
A_n:=[0,1]\setminus \cup L_n.
$$ 
Observe that $A_n$ is an open set and $A_{n+1}\subseteq A_{n}$ for all $n\in\mathbb{N}_0$. Moreover, we can choose the intervals $\{[a_n,b_n]\}_{n>1}$ so that 
\begin{equation}\label{eq_mis_compl}
    \lim_{n\to\infty} m(A_n)=0,
\end{equation}
where $m$ indicates the Lebesgue measure on $[0,1]$.

\noindent Finally, we set 
\[
L:=\bigcup_{n\geq 0}L_n=\{[a_n,b_n]\}_{n\in\mathbb{N}_0}.
\]
Let us now define $f:[0,1]\to [0,1]$ as:
\begin{equation}\label{dis_func}
    f(x)=
    \begin{cases}
        a_n &\quad\text{if } x\in[a_n,b_n] \text{ for some }n\in\mathbb{N}_0\\
        0 &\quad\text{if } x\in[0,1]\setminus \cup L
    \end{cases}
\end{equation}
From the construction of $L$ it is clear that $(L,\le_\mathcal{P})$ is linearly ordered and for every $I, J\in L$ such that $I<_\mathcal{P} J$, there is $K\in L$ such that
\[
I<_\mathcal{P} K<_\mathcal{P} J,
\]
which means that $(L,<_\mathcal{P})$ is also densely ordered.
Since $[a_0,b_0]<_\mathcal{P} [a_n,b_n]<_\mathcal{P} [a_1,b_1]$ for every $n\ge2$, we have that $(L,\le_\mathcal{P})$ is order isomorphic to $([0,1]\cap \mathbb{Q}, \leq)$.

\noindent It remains to show that $(\mathfrak{C}_f,\preceq)$ is order isomorphic to $(L,\le_\mathcal{P})$.
Notice that the points $a_n$ for every $n\in\mathbb{N}_0$ are the only fixed points for the map $f$.
We want to prove that
$$
\mathfrak{C}_f=
\{\{a_n\}: n\in\mathbb{N}_0\}.
$$
\noindent Let us show that for every $[a_p,b_p],[a_q,b_q]\in L$ with $[a_p,b_p]<_\mathcal{P} [a_q,b_q]$ it holds that $[a_p]\prec [a_q]$.
\begin{enumerate}
\item Let us first prove that $[a_p]\preceq [a_q]$, that is, $a_q\,\mathcal{C}\, a_p$.   

Fix $\varepsilon>0$.
If $\varepsilon> a_q-b_p$ then the three points $a_q, b_p, a_p$ form an $\varepsilon$-chain from $a_q$ to $a_p$.
Indeed, $d(f(a_q),b_p)=d(a_q,b_p)<\varepsilon$ and $f(b_p)=a_p$.
So suppose that $\varepsilon<a_q-b_p$. 
By \eqref{eq_mis_compl} and since $(L,<_\mathcal{P})$ is densely ordered, there is $n$ sufficiently large that we can choose $n-2$ intervals 
\[
J_{n-2}<_\mathcal{P} J_{n-1}<_\mathcal{P}\cdots <_\mathcal{P} J_1
\]
in $L$ with $J_1=[x_1,y_1]$, $J_2=[x_2,y_2]$, $\ldots$, $J_{n-2}=[x_{n-2},y_{n-2}]$, such that
\begin{align*}
    &d(a_q,y_1)<\varepsilon\\
    &d(x_1,y_2)<\varepsilon\\
    &\quad\quad\vdots\\
    &d(x_{n-3},y_{n-2})<\varepsilon\\
    &d(x_{n-2},b_p)<\varepsilon
\end{align*}
From the definition of $f$ given by \eqref{dis_func}, we have that $f(a_q)=a_q$ and $f(y_i)=x_i$ for every $i=1,\ldots,n-2$. Therefore, we can rewrite the previous inequalities as
\begin{align*}
    &d(f(a_q),y_1)<\varepsilon\\
    &d(f(y_1),y_2)<\varepsilon\\
    &\quad\qquad\vdots\\
    &d(f(y_{n-3}),y_{n-2})<\varepsilon\\
    &d(f(y_{n-2}),b_p)<\varepsilon
\end{align*}
Since $f(b_p)=a_p$, it follows that the points
$$
a_q,y_1,y_2,\ldots,y_{n-2},b_p, a_p
$$
form an $\varepsilon$-chain from $a_q$ to $a_p$. 
We conclude that $[a_p]\preceq [a_q]$.

\item It remains to show that $[a_q]\not\preceq [a_p]$.
Notice that, by \eqref{dis_func} we have $f(x)\le x$ for every $x\in [0,1]$.
Fix $\varepsilon>0$ such that $\varepsilon< b_p-a_p\leq\frac{1}{4}$.
Then, there can be no $\varepsilon$-chain 
\[
a_p=x_0,x_1,x_2,\ldots,x_n=a_q
\]
from $a_p$ to $a_q$.
In fact, suppose by contradiction that there is an $\varepsilon$-chain  $x_0,\ldots,x_n$ from $a_p$ to $a_q$. If $x_1\ge x_0=a_p$, since 
\begin{equation}\label{d(f(x_0),x_1)<epsilon}
d(f(x_0),x_1)<\varepsilon,
\end{equation}
then $x_1\in [a_p,b_p]$. Indeed, since $f(x_0)=f(a_p)=a_p$, inequality (\ref{d(f(x_0),x_1)<epsilon}) becomes $d(a_p,x_1)<\varepsilon$.
And again, if $x_2\ge x_0$, the condition 
$$
d(f(x_1),x_2)=d(a_p,x_2)<\varepsilon
$$
implies that $x_2\in [a_p,b_p]$.
So, let $1\le j\le n-1$ be the first index for which 
$$
x_j< f(x_{j-1})=a_p=x_0.
$$
If $x_j\in [0,1]\setminus \cup L$, then $f(x_j)=0$ and since $\varepsilon<\frac 1 4$, it follows that 
$$
f(x_{k})=0 \quad \forall\, k=j,\ldots,n-1.
$$ 
Hence $a_q\in[0,\frac 1 4]$, which is impossible.
If $x_j\in L$, since $f(x_j)\le  x_j< a_p$, then $x_k < b_p$ for every $j<k\le n$. Therefore, $d(f(x_{n-1}),a_q)\ge d(a_p,a_q)>\varepsilon$ which is a contradiction.
\end{enumerate}

\noindent In particular, from the definition of the map $f$ given by \eqref{dis_func} and by points 1. and 2. we have that
$
\mathfrak{C}_f=
\{\{a_n\}:n\in\mathbb{N}_0\}
$.

\noindent Finally, by points 1. and 2. it follows that the application
$$
[a_n,b_n]\mapsto \{a_n\} \qquad (n\in\mathbb{N}_0)
$$
is an order isomorphism between $(L,\le_\mathcal{P})$ and $(\mathfrak{C}_f,\preceq)$.
\end{proof}

\noindent The theorem proved above shows that it is possible to realize a countable linearly ordered and densely ordered set with endpoints as the chain components poset of a discontinuous dynamical system $(X,f)$ with $X$ compact metric space.
If we modify the construction at step zero, that is, when we define the first set in $L$, then it is possible to realize a countable linearly ordered and densely ordered set whose order type is $([0,1)\cap\mathbb{Q}, \leq_{\mathbb{Q}})$.

\begin{thm}\label{order_nomin}
There is a discontinuous dynamical system $([0,1],f)$ such that the poset $(\mathfrak{C}_f,\preceq)$ has order type $([0,1)\cap\mathbb{Q},\leq)$.
\end{thm}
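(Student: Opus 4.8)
The plan is to mimic the construction in the proof of Theorem~\ref{order_type_[0,1]}, changing only the ``step zero'' of the inductive definition of the family $L$ of intervals so that the resulting linear order has a minimum but no maximum. Concretely, instead of starting from the two intervals $[a_0,b_0]=[0,\frac14]$ and $[a_1,b_1]=[\frac34,1]$, I would keep the interval $[a_0,b_0]=[0,\frac14]$ as the smallest element, but replace the role of the ``rightmost'' interval $[\frac34,1]$ by a sequence of intervals $K_1<_{\mathcal P}K_2<_{\mathcal P}\cdots$ accumulating at the right endpoint $1$, with $1\notin\bigcup_m K_m$. For instance one can take $K_m=[c_m,d_m]$ with $c_m,d_m\uparrow 1$, place all the $K_m$ to the right of $[0,\frac14]$, and then run exactly the same ``insert an intermediate interval between each pair of consecutive intervals'' procedure of Theorem~\ref{order_type_[0,1]} inside each of the finitely-many gaps at each stage, so that at the end $(L,\le_{\mathcal P})$ is a countable dense linear order with a least element $[0,\frac14]$ and no greatest element (because the $K_m$ form a strictly increasing cofinal sequence with no upper bound in $L$). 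A standard back-and-forth / Cantor argument then identifies $(L,\le_{\mathcal P})$ with $([0,1)\cap\mathbb{Q},\le)$: it is countable, densely ordered, has a minimum and no maximum.

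Next I would define $f:[0,1]\to[0,1]$ exactly as in \eqref{dis_func}: $f(x)=a_n$ if $x\in[a_n,b_n]$ for some interval of $L$ (using the enumeration $L=\{[a_n,b_n]\}_{n\in\mathbb{N}_0}$, where now $[a_n,b_n]$ also ranges over the $K_m$ and all inserted intermediate intervals), and $f(x)=0$ otherwise. I would still arrange, as in \eqref{eq_mis_compl}, that the Lebesgue measure of $A_n=[0,1]\setminus\bigcup L_n$ tends to $0$; this is what makes the density argument for the existence of $\varepsilon$-chains work, and it can clearly be enforced while simultaneously pushing the $d_m$ up to $1$. The fixed points of $f$ are exactly the left endpoints $a_n$, and $f(x)\le x$ everywhere, just as before.

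The core of the proof is then the verification that $\mathfrak{C}_f=\{[a_n]\}_{n\in\mathbb{N}_0}$ with each $[a_n]=\{a_n\}$, and that $[a_p,b_p]\mapsto[a_p]$ is an order isomorphism $(L,\le_{\mathcal P})\to(\mathfrak{C}_f,\preceq)$. Both directions are verbatim the arguments in the proof of Theorem~\ref{order_type_[0,1]}: if $[a_p,b_p]<_{\mathcal P}[a_q,b_q]$ then for small $\varepsilon$ one builds an $\varepsilon$-chain $a_q,y_1,\dots,y_{n-2},b_p,a_p$ going down through intermediate intervals (using density of $(L,<_{\mathcal P})$ together with $m(A_n)\to 0$), giving $[a_p]\preceq[a_q]$; conversely $f(x)\le x$ together with the choice $\varepsilon<b_p-a_p\le\frac14$ forbids any $\varepsilon$-chain from $a_p$ up to $a_q$, giving $[a_q]\not\preceq[a_p]$. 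The only point needing a word of care is that the argument ``$x_j\in[0,1]\setminus\bigcup L\Rightarrow f(x_k)=0$ for all $k\ge j\Rightarrow a_q\in[0,\frac14]$'' must still reach a contradiction; this is fine because every $a_q$ other than $a_0=0$ lies strictly to the right of $\frac14$ by construction, and $a_0$ is the minimum so the inequality $[a_p,b_p]<_{\mathcal P}[a_q,b_q]$ already excludes $a_q=a_0$.

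I expect the only genuinely non-routine step to be bookkeeping: checking that one really can perform the insertions so that $(L,\le_{\mathcal P})$ has order type $([0,1)\cap\mathbb{Q},\le)$ rather than, say, $([0,1]\cap\mathbb{Q},\le)$ or $([0,1)\cap\mathbb{Q},\le)$ with an extra isolated point — i.e.\ that the cofinal sequence $(K_m)_m$ has no least upper bound in $L$ and that no maximum sneaks in. Everything dynamical (the description of $CR_f$, $\mathfrak{C}_f$, and the order) transfers without change from Theorem~\ref{order_type_[0,1]}, so the write-up can legitimately be brief, citing that proof for the $\varepsilon$-chain estimates.
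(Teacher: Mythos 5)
Your proposal is correct and takes essentially the same approach as the paper: the paper likewise keeps the map and the entire chain-component verification of Theorem~\ref{order_type_[0,1]} unchanged, modifying only the construction of $L$ so that it has a minimum and no maximum. The one (immaterial) difference is how cofinality at $1$ is arranged---the paper inserts, at each finite stage, a new topmost interval intermediate between the current maximum and $1$ (so each $L_n$ stays finite), whereas you seed stage zero with an infinite increasing sequence $K_m$ accumulating at $1$ and then densify; either variant yields a countable densely ordered set with a least and no greatest element, hence order type $([0,1)\cap\mathbb{Q},\le)$, and the $\varepsilon$-chain arguments transfer verbatim as you indicate.
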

\begin{proof} 
Similarly to what was done in the proof of Theorem \ref{order_type_[0,1]}, we want to define a dynamical system $([0,1],f)$ such that its chain components poset is order isomorphic to $(L,\leq_\mathcal{P})$, which is in turn order isomorphic to $([0,1)\cap \mathbb{Q}, \leq)$, where $L=\{[a_n,b_n]\}_{n\in\mathbb{N}}$ is a suitable countably infinite set of pairwise disjoint intervals in $[0,1]$.
We start by setting $a_1:=0$, and
\[
L_1:=\{[a_1,b_1], [a_2,b_2]\}=\{[0,b_1], [a_2,b_2]\}
\]
where the interval $[a_2,b_2]$ is intermediate between $[a_1,b_1]$ and $1$.
Let us proceed by setting
\[
L_2:=\{[a_1,b_1], [a_3,b_3],[a_2,b_2], [a_4,b_4]\}=L_1\cup\{[a_3,b_3],[a_4,b_4]\},
\]
where 
\begin{itemize}
\item $[a_3,b_3]$ is intermediate between $[a_1,b_1]$ and $[a_2,b_2]$, and
\item $[a_4,b_4]$ is intermediate between $[a_2,b_2]$ and 1.
\end{itemize}
Notice that $[a_1,b_1]<_\mathcal{P} [a_3,b_3] <_\mathcal{P} [a_2,b_2] <_\mathcal{P} [a_4,b_4] <_\mathcal{P} 1$.

\noindent Proceeding inductively, for $n\ge2$, given $L_n=\{I_1, I_2,\cdots , I_{2^n}\}$ with $I_1<_\mathcal{P} I_2<_\mathcal{P}\cdots <_\mathcal{P} I_{2^n}$, we set
\[
L_{n+1}:=L_n\cup\{I_{2^n+1}, I_{2^n+2},\cdots , I_{2^{n+1}}\}
\]
where
\begin{itemize}
    \item $I_{2^n+k}=[a_{2^n+k}, b_{2^n+k}]$ is intermediate between $I_k$ and $I_{k+1}$, so that $I_k<_\mathcal{P} I_{2^n+k}<_\mathcal{P} I_{k+1}$ for every $k\in\{1,\ldots,2^n-1\}$,
    \item $I_{2^{n+1}}=[a_{2^{n+1}}, b_{2^{n+1}}]$ is intermediate between $I_{2^n}$ and $1$, so that $I_{2^n}<_\mathcal{P} I_{2^{n+1}}<_\mathcal{P} 1$.
\end{itemize}
For every $n\in\mathbb{N}$ we set 
$$
A_n:=[0,1]\setminus \cup L_n.
$$ 
Notice that $A_n$ is an open set and $A_{n+1}\subseteq A_{n}$ for all $n\in\mathbb{N}$. Moreover, we can choose the intervals $\{[a_n,b_n]\}_{n>1}$ so that 
\begin{equation*}
    \lim_{n\to\infty} m(A_n)=0,
\end{equation*}
where $m$ indicates the Lebesgue measure on $[0,1]$.
Finally, we set 
$$
L:=\bigcup_{n\ge 1}L_n=\{[a_n,b_n]\}_{n\in\mathbb{N}}.
$$
Let us now define the function $f:[0,1]\to [0,1]$ as:
\begin{equation*}\label{disc_nomin_f}
    f(x)=
    \begin{cases}
        a_n &\quad \text{if $x\in [a_n,b_n]$ for some $n\in\mathbb{N}$}\\
        0 &\quad   \text{if $x\in [0,1]\setminus \cup L$}
    \end{cases}
\end{equation*}
\noindent From the construction of $L$ it is clear that $(L,\le_\mathcal{P})$ is linearly ordered and for every $I, J\in L$ such that $I<_\mathcal{P} J$, there is $K\in L$ such that
$ I<_\mathcal{P} K<_\mathcal{P} J$, which means that $(L,<_\mathcal{P})$ is also densely ordered.
Since $[a_1,b_1]<_\mathcal{P}[a_n,b_n]$ for all $n\ge2$ and $L$ does not have a maximum element with respect to $<_\mathcal{P}$, it follows that $(L,\le_\mathcal{P})$ is order isomorphic to $([0,1)\cap \mathbb{Q}, \leq)$.
The proof that $\mathfrak{C}_f=\{\{a_n\}:n\in\mathbb{N}\}$ and that the chain components poset $(\mathfrak{C}_f,\preceq)$ is order isomorphic to $(L,\le_\mathcal{P})$ is analogous to that of Theorem \ref{order_type_[0,1]}.
\end{proof}

\noindent Note that in both the constructions of the two previous theorems, the map is not bijective. 
In particular, in Theorem~\ref{order_nomin}, this non-bijectivity is necessary: as stated above immediately before Corollary~\ref{cor_on_bijective_function}, a bijective map would allow us to consider its inverse, yielding a realization of a $\mathcal{C}$-poset that has order type $((0,1] \cap \mathbb{Q},\le )$, which is impossible in a compact setting due to the lack of minimal elements: as shown in Remark~\ref{rem_minimal}, any $\mathcal{C}$-poset arising from a compact metric space must contain at least one minimal element. 

\noindent The two order types have been realized with compact metric spaces and discontinuous functions.
It is straightforward to realize order types $[(0,1]\cap\mathbb{Q},\leq)$ and $((0,1)\cap\mathbb{Q},\leq)$ by discontinuous dynamical systems $(X,f)$ with $X$ a non-compact metric space as shown in the following example.

\begin{exa}
We want to construct two discontinuous dynamical systems $((0,1),f)$ and $((0,1],g)$ such that their chain components posets have, respectively, the same order types of $((0,1)\cap\mathbb{Q},\leq)$ and $((0,1]\cap\mathbb{Q},\leq)$.

\noindent Let us describe the construction for the order type $((0,1)\cap\mathbb{Q},\leq)$, an analogous argument can be used for the order type $((0,1]\cap\mathbb{Q},\leq)$. 

\noindent We want to define a set of pairwise disjoint intervals $L=\{[a_n,b_n]\}_{n\in\mathbb{N}_0}$ in $[0,1]$ such that $(L,\le_\mathcal{P})$ is order isomorphic to $((0,1)\cap\mathbb{Q},\leq)$. 
Set
$$
L_0:=\{[a_0,b_0]\},
$$
with $0<a_0<b_0<1$. Then, we define $L_1:=L_0\cup \{[a_1,b_1],[a_2,b_2]\}$ where
\begin{itemize}
    \item the interval $[a_1,b_1]$ is intermediate between 0 and $[a_0,b_0]$,
    \item the interval $[a_2,b_2]$ is intermediate between $[a_0,b_0]$ and 1. 
\end{itemize}
So we see that $0<_\mathcal{P} [a_1,b_1] <_\mathcal{P} [a_0,b_0] <_\mathcal{P} [a_2,b_2] <_\mathcal{P} 1$.

\noindent Proceeding inductively, for $n\ge1$, given 
$$
L_n=\{I_0,I_1,\ldots,I_{2^{n+1}-2}\}
$$ 
with $I_0<_\mathcal{P} I_1<_\mathcal{P} \ldots <_\mathcal{P} I_{2^{n+1}-2}$, we set 
\[
L_{n+1}:=L_n\cup\{I_{2^{n+1}-1}, I_{2^{n+1}},\cdots , I_{2^{n+2}-2}\}
\]
where
\begin{itemize}
\item $I_{2^{n+1}-1}=[a_{2^{n+1}-1}, b_{2^{n+1}-1}]$ is intermediate between $0$ and $I_0$,
\item $I_{2^{n+1}+k}=[a_{2^{n+1}+k}, b_{2^{n+1}+k}]$ is intermediate between $I_k$ and $I_{k+1}$ for every $k\in\{0,\ldots,2^{n+1}-3\}$,
\item $I_{2^{n+2}-2}=[a_{2^{n+2}-2}, b_{2^{n+2}-2}]$ is intermediate between $I_{2^{n+1}-2}$ and $1$.
\end{itemize}
So we have $0<_\mathcal{P} I_{2^{n+1}-1} <_\mathcal{P} I_0$,
$I_k<_\mathcal{P} I_{2^{n+1}+k} <_\mathcal{P} I_{k+1}$ for every $k\in\{0,\ldots,2^{n+1}-3\}$, and $I_{2^{n+1}-2}<_\mathcal{P} I_{2^{n+2}-2} <_\mathcal{P} 1$.
For every $n\in\mathbb{N}_0$ we set 
$$
A_n:=[0,1]\setminus \cup L_n.
$$ 
Notice that $A_n$ is an open set and $A_{n+1}\subseteq A_{n}$ for all $n\in\mathbb{N}_0$. Moreover, we can choose the intervals $\{[a_n,b_n]\}_{n>0}$ so that 
\begin{equation*}
    \lim_{n\to\infty} m(A_n)=0,
\end{equation*}
where $m$ indicates the Lebesgue measure on $[0,1]$.
Finally, we set
\[
L:=\bigcup_{n\geq 0}L_n.
\]
Let us now define the function $f:(0,1)\to (0,1)$ as follows:
\begin{equation*}
    f(x)=
    \begin{cases}
        a_n \quad & \text{if $x\in [a_n,b_n]$ for some $n\in\mathbb{N}_0$}\\
        \frac{1}{n+2} \quad & \text{if $x\in [\frac{1}{n+1},\frac{1}{n})\setminus \cup L$ for some $n\in\mathbb{N}$}
    \end{cases}
\end{equation*}

\noindent The construction of $L$ ensures that there are infinitely many indices $n_1,n_2,\ldots$ such that
\[
[a_{n_1},b_{n_1}]<_\mathcal{P} [a_{n_2},b_{n_2}]<_\mathcal{P} \cdots
\]
with $\lim_{i\to\infty}a_{n_i}=1$, and there are infinitely many indices $m_i$ such that
\[
[a_{m_1},b_{m_1}]>_\mathcal{P} [a_{m_2},b_{m_2}]>_\mathcal{P}\cdots
\]
with $\lim_{i\to\infty}a_{m_i}=0$.
Moreover, it is clear that for all $I,J\in L$ such that $I<_\mathcal{P} J$ there is an interval $K\in L$ such that $I<_\mathcal{P} K<_\mathcal{P} J$. Then $(L,\le_\mathcal{P})$ is linearly and densely ordered and in particular, since $L$ does not have a minimum and a maximum element, it is order isomorphic to $((0,1)\cap\mathbb{Q},\leq)$.
The proof that $\mathfrak{C}_f=\{\{a_n\}: n\in\mathbb{N}_0\}$ and that the chain components poset $(\mathfrak{C}_f,\preceq)$ is order isomorphic to $(L,\le_\mathcal{P})$ is analogous to that of Theorem \ref{order_type_[0,1]}.
\end{exa}

\noindent We have seen that in non-compact metric spaces it is possible to construct dynamical systems whose chain components poset has no minimal elements.
We now show that this is no longer the case when the space is compact: the existence of at least one minimal element becomes inevitable.

\noindent Let us state the following fact (for a proof, see \cite[Theorem 3.1]{chainsregularity}):
\begin{thm}
\label{thm1}
Let $X$ be a metrizable space and $f:X\to X$ a map. If $X$ is compact, then $CR_f\ne\emptyset$.
\end{thm}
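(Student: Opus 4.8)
Fix a metric $d$ compatible with the topology of $X$. The plan is to handle the statement one scale at a time and then combine the scales. Write $CR_\varepsilon:=\{x\in X:\text{there is an }\varepsilon\text{-chain from }x\text{ to }x\}$, so that $CR_f=\bigcap_{k\ge 1}CR_{1/k}$; it is then enough to produce a single point lying in every $CR_{1/k}$.

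The per-scale statement $CR_\varepsilon\ne\emptyset$ uses only total boundedness and no regularity of $f$. Fix $\varepsilon>0$ and, by compactness, cover $X$ with finitely many balls $B(c_1,\varepsilon/2),\dots,B(c_m,\varepsilon/2)$. Pick any $x_0\in X$ and look at the iterates $x_0,f(x_0),\dots,f^m(x_0)$: by the pigeonhole principle two of them, say $f^a(x_0)$ and $f^b(x_0)$ with $a<b$, lie in the same ball, whence $d(f^a(x_0),f^b(x_0))<\varepsilon$. Then $f^a(x_0),f^{a+1}(x_0),\dots,f^{b-1}(x_0),f^a(x_0)$ is an $\varepsilon$-chain from $f^a(x_0)$ to itself: each interior step has zero error, being a genuine orbit step, and the closing step has error $d(f^b(x_0),f^a(x_0))<\varepsilon$. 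Hence $f^a(x_0)\in CR_\varepsilon$. Pigeonholing instead among the returns of $(f^n(x_0))_n$ to a small ball around a cluster point $p^{*}$ of the orbit shows in the same way that $p^{*}\in\overline{CR_\varepsilon}$ for every $\varepsilon>0$.

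Upgrading ``each $CR_{1/k}\ne\emptyset$'' to ``some point lies in all $CR_{1/k}$'' is the crux, and it is here that discontinuity bites: $CR_\varepsilon$ need not be closed (a limit of $\varepsilon$-chain-recurrent points may fail to be one, since nothing controls the value of $f$ at the limit point), so a naive ``decreasing intersection'' or ``pass to a cluster point'' argument only lands a candidate in $\bigcap_\varepsilon\overline{CR_\varepsilon}$, which can be strictly larger than $CR_f$. One appealing route is to produce a recurrent point $p$, i.e.\ one with $\inf_{n\ge 1}d(f^n(p),p)=0$: then the orbit $p,f(p),f^2(p),\dots$ supplies the required $\varepsilon$-chains from $p$ to $p$ directly, the first step $p\to f(p)$ having zero error (so one never needs to ``know'' $f(p)$) and near-recurrence closing the loop — but proving that a recurrent point exists without continuity is itself delicate, since the classical minimal-set argument uses continuity to make orbit closures invariant. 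I would instead pass to the closure of the graph, $R:=\overline{\Gamma_f}\subseteq X\times X$, an upper semicontinuous closed relation with nonempty compact values (each $R(x)\ni f(x)$). For a closed relation the chain-recurrent structure is well behaved: a minimal nonempty closed $R$-invariant set exists by a Zorn argument and sits inside a suitably minimal (terminal) $R$-chain-transitive component $K$ from which no $R$-chain leaves, and one checks that such a $K$ is $R$-invariant, hence $f$-invariant.

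The remaining step — which I expect to be the main obstacle — is to show that $K$ meets $CR_f$, i.e.\ that the limiting (``generalized'') pseudo-orbits witnessing $R$-chain-recurrence inside $K$ can be realized by honest $f$-pseudo-orbits through nearby points. This is a shadowing-type perturbation: an $R$-step $x\to y$ (some $b\in R(x)$ with $d(b,y)<\varepsilon$) is replaced by moving to a point $x'$ close to $x$ with $f(x')$ close to $b$, and the strict inequalities let one do this node by node; the delicate node is the base point of an $R$-cycle, where one is forced to use the genuine value $f(x)$, so the cycle must be chosen so that its first edge is already a genuine $f$-edge. Throughout, compactness enters only through total boundedness (finite covers) and a finite-intersection / sequential-compactness packaging of the limiting steps, consistently with the failure of the statement on non-compact metric spaces discussed earlier in the paper.
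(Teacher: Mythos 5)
Your per-scale argument is fine: the pigeonhole construction does show $CR_\varepsilon\neq\emptyset$ for every $\varepsilon>0$, and you are right that the whole difficulty is the intersection over scales, since without continuity $CR_\varepsilon$ need not be closed. (For what it is worth, the paper does not prove this statement internally at all: it is quoted from Theorem~3.1 of the cited paper \emph{Chains without regularity}, so the comparison below is with the logical completeness of your sketch rather than with a proof printed here.) But from that point on your proposal is a strategy, not a proof, and it contains one step that is wrong as stated and one step that is missing. The wrong step: a minimal nonempty closed $R$-invariant set (for $R=\overline{\Gamma_f}$) does \emph{not} in general ``sit inside'' a terminal chain-transitive component. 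Already for the continuous map $f(x)=x^2$ on $[0,1]$, the singleton $\{1\}$ is a minimal closed invariant set, yet its chain component is the top, not the bottom, of the $\mathcal{C}$-order; so the existence of a terminal component cannot be deduced from Zorn applied to closed invariant sets. What one actually needs is that the chain relation of the \emph{closed relation} $R$ is transitive and closed (this is exactly what passing from $f$ to $\overline{\Gamma_f}$ buys, and it requires an argument: given chains from $x_k\to x$, one extracts a limit $b^*\in R(x)$ of the first image points $b^{(k)}\in R(x_k)$ and splices), and then a Zorn/Wallace argument on this closed preorder to produce a point $x$ that is minimal, i.e.\ such that $f(x)\,\mathcal{C}_R\,x$ (using $f(x)\in R(x)$, hence $x\,\mathcal{C}_R\,f(x)$, and minimality). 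None of this is in your text.

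The missing step is the one you yourself flag as ``the main obstacle'': converting $R$-chain recurrence into genuine $f$-chain recurrence. It can be done, and your instinct about the base point is the right one, but it has to be carried out: given $\varepsilon>0$ and a $\delta$-$R$-chain $f(x)=y_0,y_1,\dots,y_n=x$ with witnesses $b_i\in R(y_i)$, $d(b_i,y_{i+1})<\delta$, replace each node $y_i$ ($0\le i\le n-1$) by a point $w_i$ with $d(w_i,y_i)<\delta$ and $d(f(w_i),b_i)<\delta$ (possible precisely because $R=\overline{\Gamma_f}$), and check that $x,w_0,w_1,\dots,w_{n-1},x$ is an $\varepsilon$-$f$-chain when $\delta\le\varepsilon/3$: the first step uses the genuine value $f(x)=y_0$, the interior steps incur error at most $3\delta$, and the final node $x$ is left unperturbed since only $d(f(w_{n-1}),x)$ matters there. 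As submitted, your proposal asserts rather than proves both the existence of the required point (terminal component / $\mathcal{C}_R$-minimal point) and this perturbation lemma, so it does not yet establish the theorem.
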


\noindent The following result is an easy consequence of the previous theorem (for a proof, see \cite[Theorem 3.4]{chainsregularity}).
\begin{thm}
\label{cor1}
Let $X$ be a metrizable space and $f:X\to X$ a map. If $X$ is compact, then for every $x\in X$ there is $y\in X$ such that $y$ is chain recurrent and $x\,\mathcal{C}\, y$.
\end{thm}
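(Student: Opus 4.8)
The plan is to isolate, for a fixed $x\in X$, the set of points that $x$ can reach through arbitrarily fine pseudo-orbits, show that this set is a subsystem, and then invoke Theorem~\ref{thm1} inside it. Concretely, set
\[
\Omega(x):=\{\,y\in X : x\,\mathcal{C}\,y\,\}.
\]
First I would record three elementary closure properties of $\Omega(x)$, all of which are purely combinatorial statements about $\varepsilon$-chains and hence use no regularity of $f$. (a) $\Omega(x)\ne\emptyset$: the two-point sequence $x,f(x)$ is an $\varepsilon$-chain from $x$ to $f(x)$ for every $\varepsilon>0$ (since $d(f(x),f(x))=0<\varepsilon$), so $f(x)\in\Omega(x)$. (b) $\mathcal{C}$ is transitive: concatenating an $\varepsilon$-chain from $x$ to $y$ with an $\varepsilon$-chain from $y$ to $z$ yields an $\varepsilon$-chain from $x$ to $z$; combined with the fact that $y\,\mathcal{C}\,f(y)$ always holds (again via the chain $y,f(y)$), this gives $f(\Omega(x))\subseteq\Omega(x)$, i.e. $\Omega(x)$ is $f$-invariant. (c) $\Omega(x)$ is closed: if $y_n\to y$ with $x\,\mathcal{C}\,y_n$, then, given $\varepsilon>0$, choose $n$ with $d(y_n,y)<\varepsilon/2$ and an $(\varepsilon/2)$-chain $x=x_0,\dots,x_k=y_n$; replacing the endpoint $x_k$ by $y$ produces an $\varepsilon$-chain from $x$ to $y$, because $d(f(x_{k-1}),y)\le d(f(x_{k-1}),y_n)+d(y_n,y)<\varepsilon$.

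From (a)–(c), $\bigl(\Omega(x),\,f|_{\Omega(x)}\bigr)$ is a subsystem of $(X,f)$ in the sense of Definition~\ref{def_subsy}, and $\Omega(x)$, being a nonempty closed subset of the compact metrizable space $X$, is itself compact metrizable. Applying Theorem~\ref{thm1} to this subsystem produces a point $y\in CR_{f|_{\Omega(x)}}$: that is, $y\in\Omega(x)$ and for every $\varepsilon>0$ there is an $\varepsilon$-chain from $y$ to $y$ with all its points in $\Omega(x)$. Such a chain is \emph{a fortiori} an $\varepsilon$-chain in $X$, so $y\in CR_f$; and $y\in\Omega(x)$ means precisely $x\,\mathcal{C}\,y$. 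This is the desired conclusion.

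The only genuinely delicate point is checking that the argument survives the absence of continuity, which is exactly why the three closure properties above are phrased entirely in terms of $\varepsilon$-chains: concatenation, the trivial chain $y,f(y)$, and the triangle-inequality perturbation of a chain's endpoint never invoke continuity of $f$. In effect the real content is packaged inside Theorem~\ref{thm1} (existence of chain recurrent points in compact metrizable spaces with no regularity hypothesis), and the task here is to reduce the reachability statement to that existence statement by passing to the forward chain-closure $\Omega(x)$. A more hands-on route — iterating ever-finer pseudo-orbits starting at $x$ and extracting accumulation points by compactness — would also work, but it merely re-proves Theorem~\ref{thm1} along the way, so I would not pursue it.
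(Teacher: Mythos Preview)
Your argument is correct. The three closure properties of $\Omega(x)$ are verified exactly as you say, the restriction $(\Omega(x),f|_{\Omega(x)})$ is a legitimate compact metrizable dynamical system to which Theorem~\ref{thm1} applies, and chain recurrence in the subsystem trivially lifts to chain recurrence in $X$ because the chains are the same and the metric is the restricted one.

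As for comparison: the paper does not give its own proof of this statement but defers to an external reference (\cite[Theorem~3.4]{chainsregularity}), so there is nothing to compare against directly. Your reduction---isolate the forward chain-closure $\Omega(x)$, check it is a compact invariant closed set, and invoke Theorem~\ref{thm1} there---is the natural and standard route, and it cleanly separates the reachability part (which is elementary) from the existence-of-recurrence part (which is the real content, already packaged in Theorem~\ref{thm1}). Your remark that a direct accumulation-point argument would merely re-prove Theorem~\ref{thm1} is apt.
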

\noindent We can now prove our final result, for which most of the work is done by the following lemma.
\begin{lem}\label{lem_max}
Let $X$ be a compact metric space and $f:X\rightarrow X$ a map. If $\{x_k\}_{k\in\mathbb{N}}\subseteq CR_f$ is a sequence of chain-recurrent points such that, for every $k$, we have $x_k\,\mathcal{C}\, x_{k+1}$, then there is a chain-recurrent point $y$ such that $x_k\,\mathcal{C}\,y$ for every $k$.
\end{lem}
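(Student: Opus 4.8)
The plan is to obtain $y$ by combining compactness of $X$ with Theorem~\ref{cor1}, after first squeezing a suitable limit point out of the sequence $\{x_k\}$. The preliminary fact that makes everything work is the transitivity of the chain relation: concatenating an $\varepsilon$-chain from $a$ to $b$ with an $\varepsilon$-chain from $b$ to $c$ produces an $\varepsilon$-chain from $a$ to $c$, so $a\,\mathcal{C}\,b$ and $b\,\mathcal{C}\,c$ imply $a\,\mathcal{C}\,c$. In particular, the hypothesis $x_k\,\mathcal{C}\,x_{k+1}$ upgrades to $x_k\,\mathcal{C}\,x_m$ for all $k\le m$.

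First I would use compactness of $X$ to pass to a subsequence $x_{k_j}\to p$ for some $p\in X$, and then show that $x_k\,\mathcal{C}\,p$ for every $k$. Fix $k$ and $\varepsilon>0$, and choose $j$ large enough that $k_j\ge k$ and $d(x_{k_j},p)<\varepsilon/2$. Since $x_k\,\mathcal{C}\,x_{k_j}$, there is an $(\varepsilon/2)$-chain $x_k=z_0,z_1,\dots,z_n=x_{k_j}$ with $n\ge 1$. Replacing the final point by $p$ yields the sequence $z_0,\dots,z_{n-1},p$, and $d(f(z_{n-1}),p)\le d(f(z_{n-1}),x_{k_j})+d(x_{k_j},p)<\varepsilon/2+\varepsilon/2=\varepsilon$, so this is an $\varepsilon$-chain from $x_k$ to $p$. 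Hence $x_k\,\mathcal{C}\,p$. I emphasize that this step only invokes the triangle inequality and does \emph{not} use any continuity of $f$: we only perturb the endpoint of the chain, never apply $f$ near $p$.

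Then I would invoke Theorem~\ref{cor1} with the point $p$: since $X$ is a compact metric space, there exists a chain recurrent point $y$ with $p\,\mathcal{C}\,y$. Transitivity now closes the argument: for every $k$ we have $x_k\,\mathcal{C}\,p$ and $p\,\mathcal{C}\,y$, hence $x_k\,\mathcal{C}\,y$, while $y\in CR_f$ by construction. This $y$ is the point the lemma asks for.

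The main point one must not overlook is that, since $f$ is a bare map, $CR_f$ need not be closed, so the naive attempt to take $y=p$ (the limit point itself) can fail, as $p$ may well not be chain recurrent; the substantive content of the lemma is precisely that Theorem~\ref{cor1} lets one ``close the loop'' and produce a chain recurrent point dominating $p$. The only other delicate bit is the $\varepsilon/2$ bookkeeping in the second step, which is exactly what allows the endpoint-replacement trick to turn an $(\varepsilon/2)$-chain reaching $x_{k_j}$ into an $\varepsilon$-chain reaching $p$ despite having no control whatsoever on the behaviour of $f$ at or near $p$.
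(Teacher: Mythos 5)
Your proof is correct and follows essentially the same route as the paper's: extract a limit point of the sequence by compactness, use the $\varepsilon/2$ endpoint-replacement trick to show every $x_k$ is chain related to that limit point, then apply Theorem~\ref{cor1} and transitivity to land on a chain recurrent $y$. Your remark that the limit point itself need not be chain recurrent (so Theorem~\ref{cor1} is genuinely needed) is exactly the point of the paper's argument as well.
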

\begin{proof}
Since $X$ is compact, we have that the sequence $\{x_k\}_{k\in\mathbb{N}}$ has at least one limit point $z$. 
Fix $h \in\mathbb{N}$ and take $\varepsilon>0$. 
Then, there is a positive integer $\overline{k}>h$ such that $d(x_{\overline{k}},z)<\frac{\varepsilon}{2}$. 
By hypothesis and by transitivity of $\mathcal{C}$, there exists an $\frac{\varepsilon}{2}$-chain $x_h=z_0, z_1,\ldots, z_n=x_{\overline{k}}$ from $x_h$ to $x_{\overline{k}}$. 
Since
$$
d(f(z_{n-1}),z)\le d(f(z_{n-1}),x_{\overline{k}})+d(x_{\overline{k}},z)<\frac \epsilon 2 +\frac \epsilon 2=\varepsilon,
$$
we have that the points $z_0,z_1,\ldots,z_{n-1},z$ form an $\varepsilon$-chain from $x_h$ to $z$. 
Therefore, $x_h \,\mathcal{C}\, z$, and by the arbitrariness of $h$, it follows that $x_k\,\mathcal{C}\, z$ for every $k\in\mathbb{N}$.
By Theorem \ref{cor1}, there is $y\in CR_f$ such that $z\,\mathcal{C}\,y$. Therefore, by the transitivity of $\mathcal{C}$, it follows that $x_k\,\mathcal{C}\, y$ for every $k\in\mathbb{N}$.
\end{proof}

\begin{rem}\label{rem_net}
The argument of Lemma \ref{lem_max} extends from sequences to nets (see Def.~\ref{def_netcluster}). 
Indeed, let $\Lambda$ be a directed set (see Def.~\ref{def_directedset}) and
$\{x_\lambda\}_{\lambda\in\Lambda}\subseteq CR_f$ a net in $X$ such that $x_\lambda\,\mathcal{C}\,x_\mu$ whenever $\lambda\leq\mu$. We show that there exists $y\in CR_f$ such that $x_\lambda\,\mathcal{C}\, y$ for every $\lambda\in\Lambda$.
Since $X$ is compact, the net has a cluster point $z\in X$ (see Def.~\ref{def_netcluster}). 
Fix $\lambda_0\in\Lambda$ and $\varepsilon>0$.
Choosing $\mu\geq\lambda_0$ with $d(x_\mu,z)<\varepsilon/2$, exactly as in the
proof of Lemma \ref{lem_max} one obtains $x_{\lambda_0}\,\mathcal{C}\,z$.
Hence $x_\lambda\,\mathcal{C}\,z$ for every $\lambda\in\Lambda$. 
By
Theorem \ref{cor1}, there exists $y\in CR_f$ such that $z\,\mathcal{C}\,y$,
and therefore $x_\lambda\,\mathcal{C}\,y$ for every $\lambda\in\Lambda$.
\end{rem}

\noindent The following theorem follows straightforwardly from the previous lemma.

\begin{thm}\label{thm_max}
Let $X$ be a compact metric space and $f:X\rightarrow X$ a map.
If $\{C_k\}_{k\in\mathbb{N}}\subseteq \mathfrak{C}_f$ is a collection of distinct chain components such that, for every $k$, we have $C_{k+1} \prec C_k$, then there is a chain component $C$ such that $C\prec C_k$ for every $k$.
\end{thm}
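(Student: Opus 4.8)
The plan is to deduce the statement directly from Lemma~\ref{lem_max} by passing to representatives and then upgrading the non-strict relation to the strict one. First I would choose, for each $k$, a representative $x_k\in CR_f$ with $C_k=[x_k]$. Unwinding the convention $[x]\preceq[y]\iff y\,\mathcal{C}\,x$, the hypothesis $C_{k+1}\prec C_k$ gives in particular $[x_{k+1}]\preceq[x_k]$, which means $x_k\,\mathcal{C}\,x_{k+1}$. Hence the sequence $\{x_k\}_{k\in\mathbb{N}}\subseteq CR_f$ satisfies exactly the hypothesis of Lemma~\ref{lem_max}.

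Applying Lemma~\ref{lem_max}, I obtain a chain-recurrent point $y$ with $x_k\,\mathcal{C}\,y$ for every $k$. Setting $C:=[y]\in\mathfrak{C}_f$, the relation $x_k\,\mathcal{C}\,y$ translates back to $C\preceq C_k$ for every $k$.

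It remains to promote $\preceq$ to the strict relation $\prec$. Suppose, towards a contradiction, that $C=C_j$ for some $j$. Then from the conclusion just obtained we have $C\preceq C_{j+1}$, while the hypothesis gives $C_{j+1}\prec C_j=C$; combining these in the poset $(\mathfrak{C}_f,\preceq)$ yields $C\prec C$, which is absurd. Therefore $C\neq C_k$, and hence $C\prec C_k$, for every $k$, as desired.

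The argument is essentially bookkeeping: the only point requiring care is keeping track of the order convention so that the chain condition $x_k\,\mathcal{C}\,x_{k+1}$ of Lemma~\ref{lem_max} correctly matches the descending chain $C_{k+1}\prec C_k$, and symmetrically for the output point $y$. All the genuine work — using compactness to extract a limit point $z$ of $\{x_k\}$, threading $\varepsilon$-chains through $z$, and invoking Theorem~\ref{cor1} to land inside $CR_f$ — has already been carried out in the lemma, so no further analytic input is needed here.
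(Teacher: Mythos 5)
Your proposal is correct and follows exactly the route the paper intends: the paper gives no separate argument, stating only that Theorem~\ref{thm_max} ``follows straightforwardly'' from Lemma~\ref{lem_max}, and your passage to representatives, translation through the convention $[x]\preceq[y]\iff y\,\mathcal{C}\,x$, and the antisymmetry/transitivity argument upgrading $C\preceq C_k$ to $C\prec C_k$ are precisely the omitted bookkeeping.
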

\begin{proof}
Take a sequence of points $\{x_k\}_{k\in\mathbb{N}}$ such that $x_k\in C_k$, let $y$ be one of its sublimits and apply Lemma \ref{lem_max} to this sequence, so as to obtain that
\begin{equation}\label{foreveryk}
    \forall k\in\mathbb{N},\  x_k\,\mathcal{C}\,y. 
\end{equation}
Observe now that $y$ cannot belong to any of the components $C_k$, as otherwise, for $h>k$, we would have $x_{h}\,\mathcal{C}\, x_k$, so that $C_k=C_h$ against the assumptions. Therefore, $y$ belongs to a chain component $C$ such that, for every $k\in\mathbb{N}$, $C\neq C_k$. It follows then from Eq.\eqref{foreveryk} that $C\prec C_k$ for every $k$.
\end{proof}

\begin{rem}\label{rem_minimal}
The argument of Theorem~\ref{thm_max} extends to any linearly ordered subset of $\mathfrak{C}_f$. 
Indeed, let $\Lambda\subseteq \mathfrak{C}_f$ be a nonempty linearly ordered subset (in the inherited order), and regard $\Lambda$ as a directed set $(\Lambda,\le)$ where $\le$ is the inverse order $\preceq^{-1}$. 
For every chain component $C\in\Lambda$, choose $x_C\in C$. 
If $C\leq D$ in the directed order, then
$D\preceq C$, hence $x_C\,\mathcal{C}\,x_D$. By Remark~\ref{rem_net}, there exists
$y\in CR_f$ such that $x_C\,\mathcal{C}\,y$ for every $C\in\Lambda$.
Therefore, if $C_*=[y]$, then $C_*\preceq C$ for every $C\in\Lambda$.
Thus every nonempty linearly ordered subset in $(\mathfrak{C}_f,\preceq)$ has a lower bound.
Since $\mathfrak{C}_f\neq\emptyset$ by Theorem \ref{thm1}, Zorn's lemma
applied to the inverse order on $\mathfrak{C}_f$ shows that
$(\mathfrak{C}_f,\preceq)$ has a minimal element.
\end{rem}

\section{$\mathcal{C}^+$-posets under topological conjugacy}

In the classical setting, the chain relation ${\cal C}$ is known to be a topological invariant: if two systems are topologically conjugate, then their associated posets of chain components are order-isomorphic. In this final section, we explore whether a similar invariance holds for the refined relation ${\cal C}^+$ defined using continuous pointwise precision functions \(\varepsilon : X \to (0, +\infty)\).

\noindent We show that the poset structure $(\mathfrak{C}_f^+, \preceq^+)$, derived from the ${\cal C}^+$-chain components of a system $(X,f)$, is likewise preserved under topological conjugacy. This result highlights the robustness of the dynamical order structure even in the more flexible framework introduced by ${\cal C}^+$, and confirms that the refinement maintains a fundamental compatibility with the topological nature of the dynamics.

\noindent Recall that two dynamical systems $(X,f)$ and $(Y,g)$ are \emph{topologically conjugate} if there is a homeomorphism $h:X\rightarrow Y$ such that $h\circ f=g\circ h$.

\noindent Let us give the following stronger definition of the chain relation where we replace the fixed $\varepsilon$'s of Section \ref{sec:1} by arbitrary positive continuous functions. 
\begin{defi} 
Let $(X,f)$ be a dynamical system. We indicate by 
\[
C(X,\mathbb{R}^+)=\{\varepsilon:X\to(0,\infty):\text{ $\varepsilon$ is continuous}\}.
\]
We say that two points $x$ and $y$ of $X$ are ${C(X,\mathbb{R}^+)}$-{\em chain related}, in short $x\,\mathcal{C}^+\,y$, if for every $\varepsilon\in C(X,\mathbb{R}^+)$ there is a finite set of points $x_0,x_1,\ldots,x_n$ in $X$, with $n\in\mathbb{N}$, such that:
\begin{itemize}
\item $x_0=x$ and $x_n=y$,
\item $d(f(x_i),x_{i+1})<\varepsilon(f(x_i))$.
\end{itemize}
\end{defi}
\noindent The notion $\mathcal{C}^+$ is stronger than $\mathcal{C}$ in the sense that, for every $x,y\in X$, it holds that 
\begin{equation}\label{weaker}
x\,\mathcal{C}^+\,y \implies x\,\mathcal{C}\,y.
\end{equation}
The converse of \eqref{weaker} is true for compact metric spaces, while it is not necessarily true for non-compact, locally compact metric spaces (see Example 1 in \cite[p.~1141]{hu92}).

\noindent Set
$$
CR^+_f:=\{x\in X:x\,\mathcal{C}^+\,x\},
$$
and let $\mathfrak{C}^+_f:=CR^+_f/E^+$ be the set of equivalence classes on $CR^+_f$ with respect to the relation $E^+$ defined as
$$
x\, E^+\, y \iff x\,\mathcal{C}^+\, y \text{ and } y\,\mathcal{C}^+\, x.
$$
Finally, let $\preceq^+$ be the partial order relation on $\mathfrak{C}^+_f$ defined as
$$
[x]\preceq^+ [y] \iff y\,\mathcal{C}^+\, x.
$$
\\
Recalling the Def.~3.\textit{bis}., the chain relation $\mathcal{C}$ is a topological concept, in the sense that $x\,\mathcal{C}\,y$ depends only on the topology on $X$ and not on the metric, if of course the metric is compatible with the topology. 
Therefore, it is clear that if we consider two topologically conjugate dynamical systems $(X,f)$ and $(Y,g)$ with $X$ and $Y$ topological spaces, then the chain components posets $(\mathfrak{C}_f,\preceq)$ and $(\mathfrak{C}_g,\preceq)$ are order isomorphic.
Notice that the converse is not generally true, as shown elementarily by the example $X=Y=[0,1]$ with $f(x)=x$ and $g(x)=1-x$. 
In fact, both posets $(\mathfrak{C}_f,\preceq)$ and $(\mathfrak{C}_g,\preceq)$ have only one element, namely, $\mathfrak{C}_f=\mathfrak{C}_g=\{[0,1]\}$, so they are order isomorphic.
However, any homeomorphism $h$ between $(X,f)$ and $(Y,g)$ must satisfy $h(x)=1-h(x)$, which implies that $h(x)=\frac 1 2 $ for every $x\in [0,1]$. 

\noindent In the following, we show that the previous implication is also true for non-compact spaces with respect to the relation $\mathcal{C}^+$ as shown in Theorem \ref{th_conj}. Let us start by proving the following lemma.
\begin{lem}\label{lem_conj}
    Let $(X,f)$ and $(Y,g)$ be two dynamical systems with $(X,d_X)$ and $(Y,d_Y)$ metric spaces. If a homeomorphism $h:X\rightarrow Y$ is a topological conjugacy, then for $x,y\in X$,
    $$x\,\mathcal{C}^+\, y \iff h(x)\,\mathcal{C}^+\, h(y).$$
\end{lem}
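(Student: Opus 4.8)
The plan is to reduce the biconditional to a single implication, and that implication to a transfer property for continuous precision functions. Since $h\circ f = g\circ h$ rearranges to $h^{-1}\circ g = f\circ h^{-1}$, the map $h^{-1}$ is itself a topological conjugacy from $(Y,g)$ to $(X,f)$; hence it suffices to prove $x\,\mathcal{C}^+\,y \Rightarrow h(x)\,\mathcal{C}^+\,h(y)$, with the reverse implication obtained by applying this to $h^{-1}$ in place of $h$.

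So I would assume $x\,\mathcal{C}^+\,y$, fix an arbitrary $\delta\in C(Y,\mathbb{R}^+)$, and try to exhibit a chain realizing $h(x)\,\mathcal{C}^+\,h(y)$ for this $\delta$. The idea is to take the $h$-image of a suitable chain in $X$: using $h\circ f=g\circ h$, if $x=x_0,\dots,x_n=y$ is a chain in $X$ then the $\delta$-chain condition for $h(x_0),\dots,h(x_n)$ reads $d_Y\big(h(f(x_i)),h(x_{i+1})\big)<\delta\big(h(f(x_i))\big)$ for each $i$. Everything then reduces to the claim that there exists $\varepsilon\in C(X,\mathbb{R}^+)$ such that, for all $p,q\in X$, $d_X(p,q)<\varepsilon(p)$ implies $d_Y\big(h(p),h(q)\big)<\delta\big(h(p)\big)$. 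Granting this, one applies the definition of $x\,\mathcal{C}^+\,y$ to this particular $\varepsilon$ to get a chain $x=x_0,\dots,x_n=y$ with $d_X(f(x_i),x_{i+1})<\varepsilon(f(x_i))$, and the claim (with $p=f(x_i)$, $q=x_{i+1}$) shows that $h(x_0),\dots,h(x_n)$ is a $\delta$-chain from $h(x)$ to $h(y)$; since $\delta$ was arbitrary, $h(x)\,\mathcal{C}^+\,h(y)$.

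The heart of the argument, and the step I expect to be the main obstacle, is producing a genuinely \emph{continuous} $\varepsilon$: continuity of $h$ gives, at each point $p$, some admissible radius, but nothing a priori prevents such radii from depending discontinuously on $p$. I would circumvent this with a distance-to-a-closed-set construction. Put $\Delta:=\delta\circ h\in C(X,\mathbb{R}^+)$ and $v(p,q):=d_Y\big(h(p),h(q)\big)$, a continuous function on $X\times X$. Then $S:=\{(p,q)\in X\times X : v(p,q)\ge\Delta(p)\}$ is closed and disjoint from the diagonal, since $v(p,p)=0<\Delta(p)$. Metrizing $X\times X$ by $D\big((p,q),(p',q')\big)=\max\{d_X(p,p'),d_X(q,q')\}$, I set $\varepsilon(p):=\min\{1,\,D((p,p),S)\}$, with the convention $D(\cdot,\emptyset)=+\infty$. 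Because $p\mapsto(p,p)$ is an isometric embedding and $a\mapsto D(a,S)$ is $1$-Lipschitz, $\varepsilon$ is continuous; because $S$ is closed and avoids the diagonal, $\varepsilon(p)>0$ for every $p$. Finally, if $d_X(p,q)<\varepsilon(p)$ then $D\big((p,p),(p,q)\big)=d_X(p,q)<D((p,p),S)$, so $(p,q)\notin S$, i.e. $v(p,q)<\Delta(p)=\delta\big(h(p)\big)$, which is exactly the claim.

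The remaining items — verifying that $h^{-1}$ is indeed a conjugacy, that the chain condition unwinds as stated via $h(f(x_i))=g(h(x_i))$, and that $n\ge 1$ is preserved under taking images — are routine and I would dispatch them briefly.
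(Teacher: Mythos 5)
Your proposal is correct and follows essentially the same route as the paper: transfer a continuous precision function from $Y$ to $X$, push the chain through $h$ using $h\circ f=g\circ h$ (so that $d_Y(g(h(x_i)),h(x_{i+1}))<\delta(g(h(x_i)))$), and obtain the converse by applying the forward implication to the conjugacy $h^{-1}$. The only difference is that the paper outsources the key transfer step --- the existence of $\varepsilon\in C(X,\mathbb{R}^+)$ with $d_X(p,q)<\varepsilon(p)\Rightarrow d_Y\bigl(h(p),h(q)\bigr)<\delta\bigl(h(p)\bigr)$ --- to \cite[Lemma 2.3]{kumarlal21}, whereas you prove it directly via the distance-to-a-closed-set construction, which is correct and makes the argument self-contained.
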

\begin{proof}
The proof proceeds along the same lines of the proof of \cite[Theorem 2.7]{kumarlal21} with the modification that we consider two possibly distinct points $x,y\in X$ such that $x\,\mathcal{C}^+\, y$, instead of considering a single point $x\in X$ such that $x\,\mathcal{C}^+\, x$. 

\noindent Let $\varepsilon\in C(X,\mathbb{R}^+)$. Since $h$ is continuous, by \cite[Lemma 2.3]{kumarlal21}, there exists $\delta\in C(X,\mathbb{R}^+)$ such that $d_Y(h(x_1),h(x_2))<\varepsilon(h(x_1))$ whenever $d_X(x_1,x_2)<\delta(x_1)$. Therefore, for all $i=1,\ldots,n-1$, $$d_X(f(x_i),x_{i+1})<\delta(f(x_i)).$$ Set $y_i:=h(x_i)$ for all $i=1,\ldots,n$. Since $h\circ f=g\circ h$ and $d_X(f(x_i),x_{i+1})<\delta(f(x_i))$, it follows that $$d_Y(g(y_i),y_{i+1})=d_Y(h\circ f(x_i),h(x_{i+1}))<\varepsilon(h\circ f(x_i))=\varepsilon(g(y_i)).$$
Hence $h(x)=y_1,\ldots,y_n=h(y)$ form an $\varepsilon$-chain from $h(x)$ to $h(y)$. The converse implication follows from an analogous argument applied using $h^{-1}$ instead of $h$.
\end{proof}
\noindent The following theorem is a straightforward consequence of the above lemma.
\begin{thm}\label{th_conj}
Let $(X,f)$ and $(Y,g)$ be two topologically conjugate dynamical systems with $(X,d_X)$ and $(Y,d_Y)$ metric spaces. Then, the posets $(\mathfrak{C}^+_f,\preceq^+)$ and $(\mathfrak{C}^+_g,\preceq^+)$ are order isomorphic.
\end{thm}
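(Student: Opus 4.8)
The plan is to push the conjugacy $h:X\to Y$ down to a map between the quotient posets and to verify it is an order isomorphism, invoking Lemma~\ref{lem_conj} at every step. First I would observe that $h$ restricts to a bijection between $CR^+_f$ and $CR^+_g$: applying Lemma~\ref{lem_conj} with $y=x$ gives $x\,\mathcal{C}^+\,x\iff h(x)\,\mathcal{C}^+\,h(x)$, so $x\in CR^+_f\iff h(x)\in CR^+_g$; since $h$ is a bijection on all of $X$, its restriction $h|_{CR^+_f}:CR^+_f\to CR^+_g$ is a bijection.

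Next I would check that $h$ is compatible with the equivalence relations $E^+$ on $CR^+_f$ and on $CR^+_g$. For $x,y\in CR^+_f$ we have $x\,E^+\,y$ iff $x\,\mathcal{C}^+\,y$ and $y\,\mathcal{C}^+\,x$, which by Lemma~\ref{lem_conj} is equivalent to $h(x)\,\mathcal{C}^+\,h(y)$ and $h(y)\,\mathcal{C}^+\,h(x)$, that is, $h(x)\,E^+\,h(y)$. Consequently the assignment $[x]\mapsto[h(x)]$ is a well-defined map $\widehat h:\mathfrak{C}^+_f\to\mathfrak{C}^+_g$; it is injective because $[h(x)]=[h(y)]$ means $h(x)\,E^+\,h(y)$, hence $x\,E^+\,y$, hence $[x]=[y]$; and it is surjective because $h|_{CR^+_f}$ is onto $CR^+_g$, so every class $[w]\in\mathfrak{C}^+_g$ equals $[h(x)]$ with $x=h^{-1}(w)\in CR^+_f$. (Its inverse is the analogous map induced by the conjugacy $h^{-1}$.)

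Finally I would verify that $\widehat h$ both preserves and reflects the order. For $[x],[y]\in\mathfrak{C}^+_f$, by definition $[x]\preceq^+[y]$ iff $y\,\mathcal{C}^+\,x$, which by Lemma~\ref{lem_conj} holds iff $h(y)\,\mathcal{C}^+\,h(x)$, i.e. iff $[h(x)]\preceq^+[h(y)]$, that is $\widehat h([x])\preceq^+\widehat h([y])$. Hence $\widehat h$ is an order-isomorphism between $(\mathfrak{C}^+_f,\preceq^+)$ and $(\mathfrak{C}^+_g,\preceq^+)$, which is the claim.

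Since everything reduces directly to the biconditional in Lemma~\ref{lem_conj}, there is no genuine obstacle here; the only points deserving a word of care are that the map genuinely descends to the quotient (which is precisely the $E^+$-compatibility above) and that surjectivity of $\widehat h$ uses the surjectivity of $h$ — both already implicit in calling the theorem a straightforward consequence of the lemma.
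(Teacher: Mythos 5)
Your proof is correct and follows exactly the route the paper intends: the paper simply states the theorem as a straightforward consequence of Lemma~\ref{lem_conj}, and your write-up fills in precisely that argument (restriction of $h$ to the chain-recurrent sets, descent to the quotient, and order preservation/reflection via the lemma's biconditional).
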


\section*{Conclusion}

This work has explored the structural possibilities and limitations of chain component posets arising from discrete dynamical systems. We have shown that compactness and continuity impose tight constraints: certain order types, such as countable dense linear orders, cannot be realized under these assumptions. At the same time, we showed that a wide range of well-ordered posets of the form \(\lambda + 1\) (for \(\lambda < \omega_1\)) are realizable, highlighting the richness available even within the compact continuous setting. In contrast, removing regularity assumptions on the map allows for the construction of densely ordered structures that would otherwise be excluded.
The results illustrate the significant interplay between the topological properties of the phase space and structural features of the dynamics.



\bibliography{sn-bibliography}

\end{document}